\newcommand{\R}{\mathbb{R}}
\theoremstyle{plain}
\newtheorem{theorem}{Theorem}[section]
\newtheorem{lemma}[theorem]{Lemma}
\newtheorem{assumption}[theorem]{Assumption}
\theoremstyle{definition}
\theoremstyle{remark}
\newtheorem{remark}{Remark}
\newcolumntype{d}[1]{D{.}{\cdot}{#1} }
\newcommand{\inTR}[1]{}
\newcommand{\tr}{\textrm{tr}}
   \newcommand{\mfR}{\Re} 
  \newcommand{\mfI}{\Im}
\newcommand{\cL}{{\cal{L}}}
\newcommand{\cN}{{\cal N}}
\newcommand{\cG}{{\cal G}}
\newcommand{\cE}{{\cal E}}
\newcommand{\cM}{{\cal M}}
\newcommand{\cY}{{\cal Y}}
\newcommand{\bu}{{\bf u}}
\begin{document}

\title{Optimal Power Flow Pursuit \\ in the Alternating Current Model}

\author{ Jie Liu, 
Antonio Bellon, 
Andrea Simonetto, 
Martin Tak\'a\v{c}, 
Jakub Mare\v{c}ek 
\thanks{
J. Liu is 
at InnoPeak Technology, Inc. (a.k.a. OPPO U.S. Research Center) in Palo Alto, CA, USA.
J. Marecek and A. Bellon are at Czech Technical University, Prague, the Czech Republic. 
M. Tak\'a\v{c} is at the Mohamed Bin Zayed University of Artificial Intelligence (MBZUAI) in Abu Dhabi, United Arab Emirates. 
A. Simonetto is with ENSTA Paris, Institut Polytechnique de Paris, France. 
This work of J. Liu and M. Tak\'a\v{c} has been supported by IBM Corporation and  
the National Science Foundation under grants no. NSF:CCF:161871 and NSF:CMMI-1663256.
J. Marecek and A. Bellon acknowledge support of the OP RDE funded project CZ.02.1.01/0.0/0.0/16\_019/0000765 ``Research Center for Informatics''.
}
}

\maketitle

\begin{abstract}
Transmission-constrained problems in power systems can be cast as polynomial optimization problems whose coefficients vary over time. 
We consider the complications therein and suggest several approaches.
We illustrate one of the approaches in detail in the case of alternating-current optimal power flow (ACOPF) problems.
For the time-varying ACOPF, we provide an upper bound for the difference between the optimal cost for a relaxation using the most recent data and the current approximate optimal cost generated by our algorithm. This bound is a function of the properties of the instance and the rate of change of the coefficients over time.
Moreover, we bound the number of floating-point operations to perform between two subsequent updates to ensure a bounded error.
\end{abstract}

\begin{IEEEkeywords}
Numerical analysis (Mathematical programming), optimization, Power system analysis computing
\end{IEEEkeywords}


\section{Introduction} 

Renewable energy sources (RESs) have created several new challenges for power system control and analysis. 
In particular, power quality and reliability can be undermined when RESs are used widely and when all the available power is injected. 
Furthermore, in distribution systems, overvoltages might become frequent. 
There, as well as in transmission systems, rapid changes in power output can even cause power flow reversals, as well as unexpected losses and transients that current systems cannot handle.  
Therefore, real-time control mechanisms must be designed, for example, to limit real power at RESs inverters, taking into account transmission constraints. 
As a main complication, the transmission-constrained problems coming from the alternating-current model are non-convex and non-linear. 
In both theory \cite{Baker2021} and practice, linearizations tend to produce infeasible solutions.
Approaches applying Newton method~\cite{Idema2012} to the non-convex problem in a rolling-horizon framework often perform well in practice, as long as the changes are limited. However, in general, they provide little to none theoretical guarantees on their performance.
In contrast, solutions to specific relaxations (see, for example, ~\cite{lavaei2012zero, Ghaddar2015}) coincide under mild assumptions with those to non-convex problems, for all initial points. 
As a main complication, it can actually take a long time to solve the relaxation, so that meanwhile the inputs can change significantly, so that when the solution is available, it might already be outdated. 
This requires providing time-varying solutions for time-varying optimization problems (see~\cite{SPM, Simonetto2020} for recent surveys and results). 
In this article, we propose a coordinate-descent algorithm \cite{Marecek2017,Liu2017}, where each step has a closed-form solution.
This reduces the computational burden per iteration to a very limited number of floating-point operations.   
In the case of alternating-current optimal power flows (ACOPFs),
we provide an upper bound on the difference between the current
optimum $\cL^{k,*}$ of the relaxation derived using the most recent update in expectation and the approximate current cost $\cL^k$, that is,
$\lim\sup_{k \to \infty} \mathbb{E}[\cL^k - \cL^{k,*}]$.
This bound is a function of the properties of the instance
and the extent of updates to the instance.

This provides a novel perspective on time-varying optimization in power systems in two ways.
First, we do not consider linearization, with feedback or not,~\cite{Bolognani2015, 7480375, Bolognani2016, Hauswirth2017, ORTMANN2020, PICALLO2020}, but just consider the non-linear non-convex problem.
Secondly, we can analyze the properties of this approach in some detail: 
under a variant of the Polyak\,--\,{\L}ojasiewicz condition, we show a bound on the error in tracking the trajectory of optimal solutions.
Furthermore, we show that the delay in applying the update is $O(np)$ for $n$ nodes, each connected to at most $p$ other nodes, thanks to the closed-form solution for every coordinate-wise step.
As we illustrate in computational demonstrations on the IEEE 37-node and IEEE 118-node test systems, tracking of ACOPF solutions is practically possible. 
\begin{remark}
A preliminary version of this paper has appeared as~\cite{8442544}. 
Subsequently, we have developed further insights \cite{bellon2021time} into the problem, as well as a path-following procedure to track the trajectory of solutions to time-varying semidefinite programming \cite{Bellon2022}.
The present paper extends beyond this preliminary version \cite{8442544} in several ways.
First, we present the original algorithm in a broad framework that highlights the complications encountered in multiple algorithmic approaches to the problem
in Sections \ref{sec:perspective} and \ref{sec:defns}.
Second, our assumptions are discussed in more detail and proofs of our results are given. 
Finally,
our computational experiments are extended towards the use of the IEEE 118-bus test system.
\end{remark}

\section{The Problem}


Consistent with state-of-the-art literature \cite{lavaei2012zero,molzahn2011,Ghaddar2015,Marecek2017}, 
we consider the  model of a power system representing it by a graph 
with nodes $\cN := \{1,\ldots,N\}$ which are connected by some edges $\cE := \{(m,n)\} \subset \cN  \times \cN$.
A particular subset of nodes, denoted with $\cG \subseteq \cN$, contains a number $N_{\cG}:= |\cG|$ of controllable generators.
We further assume that the model is two-terminal and  pi-equivalent
Here, time is discretized and varying on a set $\{k \tau\}_{k\in\mathbb{N}}$, where  $k$ is the multiplier and $\tau > 0$ is the period, chosen to capture the variations in loads, as well as in ambient conditions. 
In our model, we use the following variables:
\begin{itemize}
\item $V_n^k \in \mathbb{C}$, denoting the phasors for the line-to-ground voltage at the $k$-th time period
\item $I_n^k \in \mathbb{C}$, denoting the current injected at node $n$ over the $k$-th time period
\item $P_{n}^k$ and $Q_n^k$, denoting the active and reactive power injected at $n \in \cG$ at the $k$-th period of time
\end{itemize}
These variables are stacked into $N$-dimensional complex vectors $V^k := (V_1^k, \ldots, V_N^k)^T \in \mathbb{C}^{N}$ 
and $I^k := (I_1^k, \ldots, I_N^k)^T \in \mathbb{C}^{N}$. 
Combining Kirchhoff's and Ohm's circuit laws, one can derive the linear equations:
$
I^k
= 
y
V^k
$,
where $y \in \mathbb{C}^{N \times N}$ is the admittance matrix of the system.
We then add a node $0$ to $\cN$ and fix for it the voltage magnitude $\rho_0$ and angle $\theta_0$, so that at any time $k$ we have $V_0^k = \rho_0 e^{\mathrm{j} \theta_0}$.
We then assume a constant load at each node $n \in \cN \setminus \cG$ and each time $k$, defining the quantities 
$P_{\ell,n}^k$ and $Q_{\ell,n}^k$ as the real and reactive demands. 
Moreover, we consider a set of nodes $\cM \subseteq \cN$ where voltage regulation is possible, for which
$V^{\mathrm{min}}$ and $V^{\mathrm{max}}$ are the relative voltage limits.
For a given generator $n \in \cG$, $P_{\textrm{av},n}^{k}$ denotes the maximum active power generation at time $k$.
In a photovoltaic system for example, $P_{\textrm{av},n}^{k}$ is a function of irradiance, which is bounded from above by a limit on the inverter. 
Finally, $S_n$ is the rated apparent power.

Typically, an off-line optimization problem, known as the alternating-current optimal power flow (ACOPF), is considered.
At a given time $k \tau$ 
this can be put in the simple form
\begin{subequations} 
\label{Pmg}
\begin{align}
\min_{V, I,  \{P_i, Q_i \}_{i \in \cG} } \,\, h^k(\{V_i\}_{i \in \cN}) + \sum_{i \in \cG} f_i^k(P_i, Q_i)
\label{mg-cost}
\end{align}
\vspace{-1cm}
\begin{align} 
&\text{s.t.} &&I^k  = y V^k, &&
\label{eq:iYv}
\\ 
&&&V_i I_i^* = P_i - P_{\ell,i}^k + \mathrm{j} (Q_i - Q_{\ell,i}^k), && i \in \cG,   \label{mg-balance-I}
\\
&&&V_n I_n^*  = - P_{\ell,n}^k - \mathrm{j} Q_{\ell,n}^k, && n \in \cN \backslash \cG, \label{mg-balance-L}
\\
&&&V^{\mathrm{min}}  \leq |V_i| \leq V^{\mathrm{max}},  &&  i \in \cM,
\label{mg-Vlimits} 
\\
&&&0  \leq {P}_{n}  \leq  \min \{ P_{\textrm{av},n}^k, S_{n} \}, && n \in \cN,
\\
&&&{Q}_{n}  \leq  S_{n}, && n \in \cG,
\label{mg-PV} 
\end{align}
\end{subequations} 
where $h^k(\{V_i\}_{i \in \cN})$ captures system-level objectives and $f_i^k(P_i, Q_i)$ is a time-varying function that specifies performance objectives for generator $i$.

This simple form of the ACOPF problem can be lifted in a higher dimension \cite{Marecek2017}.
In order to simplify the notation, when not needed, we omit the time index $k$.
We consider the following $2N\times 2N$ real matrices:
\begin{align}
M_i &:=  
\begin{bmatrix}
e_i e_i^T & 0 \\
0 & e_i e_i^T 
\end{bmatrix},
\label{defMk}
\\
y_i &:= e_i e_i^T y,
\\
Y_i &:= \frac12 
\begin{bmatrix}
\mfR(y_i + y_i^T) & \mfI(y_i^T - y_i)\\
\mfI(y_i - y_i^T) & \mfR(y_i + y_i^T) 
\end{bmatrix},
\label{defYk}
\\ 
\bar{Y}_i &:= -\frac12 
\begin{bmatrix}
\mfI(y_i + y_i^T) & \mfR(y_i - y_i^T)\\
\mfR(y_i^T - y_i) & \mfI(y_i + y_i^T) 
\end{bmatrix},
\label{defbarYk}  
\end{align}
where $e_i$ is the $i$-th vector of standard basis of $\mathbb{R}^{N}$.
We can then introduce the following new variables:
\begin{align}
x   &:=  \begin{bmatrix}\mfR{V}\\ \mfI{V}\end{bmatrix}, &&
\\
t_i &:=  \tr(Y_i xx^T), &&  i\in \mathcal{N},
\label{deft} \\ 
g_i &:=  \tr(\bar Y_i xx^T), &&  i\in \mathcal{N},
\label{defg} \\ 
h_i &:=  \tr(M_i xx^T), &&  i\in \mathcal{N}. \label{defh}
\end{align} %
Using the variables $t_i, g_i, z_i$ for $i\in \mathcal{G}$, $h_i$ for $i\in\mathcal{N}$, and $x$, we can reformulate the problem as follows:
\begin{subequations}
\label{lifted}
\begin{align}
 \hspace{-0.1cm}\min_{x\in\mathbb{R}^{2N}} &\sum_{i\in\mathcal{G}} \hspace{-0.05cm} c_i [P_{l,i} +\tr(Y_i xx^T)]^2 \hspace{-0.05cm}+\hspace{-0.05cm} d_i [Q_{l,i}  
+\tr(\bar Y_i xx^T)]^2 
\label{lifted1} 
\end{align}
\vspace{-0.5cm}
\begin{align}
&\text{s.t.} &&
t_i = \tr(Y_i xx^T), &&  i\in \mathcal{N} ,
\label{lifted2} 
\\
&&&g_i = \tr(\bar Y_i xx^T), &&  i\in \mathcal{N} ,
\label{lifted3} 
\\
&&&h_i = \tr(M_i xx^T), &&  i\in \mathcal{N} ,
\label{lifted4} 
\\
&&&V_{min}^2\leq h_i \leq V_{max}^2, &&  i\in\mathcal{N},
\\
&&&z_i = (P_{l,i} + t_i)^2 + (Q_{l,i} + g_i)^2, &&  i\in\mathcal{G},
\label{lifted6}
\\
&&&z_i \leq S_i^2, &&  i\in\mathcal{G},
\\
&&&-P_{l,i} \leq  t_i \leq P_{pv}- P_{l,i}, &&  i\in\mathcal{G},
\\
&&&t_i = -P_{l,i}, &&  i\in\mathcal{N}\backslash\mathcal{G}, 
\label{lifted9}
\\
&&&g_i =-Q_{l,i}, &&  i\in \mathcal{N}\backslash\mathcal{G}.
\label{lifted10}
\end{align}
\end{subequations} 
The problem could be further extended \cite{molzahn2011,Marecek2017} to examine phase shift and tap change transformers in single-line thermal limits. Being outside the scope of this paper, we do not explore such extensions.

\subsection{A Perspective on the Problem}
\label{sec:perspective}


\begin{table*}[th!]
    \centering
    \begin{tabular}{l|l}
    \midrule[1pt]
        \textbf{Problem type}&\textbf{Behaviors (examples)}
         \\
         \midrule[1pt]
         Time-varying polynomial optimization \eqref{eq:tv_poly_opt}  & (a.) Loss of smoothness\\
         & (b.) Loss of continuity\\
         & (c.) Continuous change of dimension\\
          \midrule 
         TV-SDP, assumed to be exact for the TV-POP  & (a.) Loss of smoothness\\
         & (b.) Loss of continuity\\
         & (c.) Continuous change of dimension\\
         \midrule[1pt] \textbf{Problem type}&\textbf{Behaviors (complete classification)}\\
          \midrule[1pt]
         TV-SDP with LICQ,   continuous data, Slater's condition, and a \textit{non-singular}
         point
         & (a.) Loss of smoothness\\
         & (b.) Loss of continuity\\
          \midrule 
         TV-SDP with LICQ,  continuous data, Slater's condition, without a \textit{non-singular} points & (a.) Loss of smoothness\\
         & (b.) Loss of continuity\\
         & (c.) Continuous change of dimension\\
         & (d.) Accumulation point for a set of irregular points\\
          \midrule 
         TV-SDP with LICQ,  continuous data, without Slater's condition  
         & (a.) Loss of smoothness\\
         & (b.) Loss of continuity\\
         & (c.) Continuous change of dimension\\
         & (d.) Accumulation point for a set of irregular points\\
         & (e.) Unattained optima\\ & (f.) Positive duality gap\\
        \midrule[1pt]
    \end{tabular}
    \caption{Assumptions on the time-varying problem ordered from the most restrictive (top) and the associated behaviors. See Section \ref{sec:defns} for definitions and discussion. }
    \label{tab:tvsdp_behaviors}
\end{table*}

A useful perspective on the \emph{non-convex} optimization problem \eqref{Pmg} is to see it as a special case of a \textit{polynomial optimization problem} (POP) of the form
\begin{equation}
\label{eq:poly_opt}
    \begin{aligned}
        &\min_{x\in\mathbb{R}^N} && p_0(x) && \\
        &\text{\ \ s.t.} && p_i(x)\ge 0, && i\in \mathcal{I},
    \end{aligned}
\end{equation}
where $p_i \in\mathbb{R}[x]$ for all $i\in\{0\}\cup\mathcal{I}$ are real polynomials in $N$ variables. One of the most reliable and successful tools for solving \eqref{eq:poly_opt} is the moment-sum-of-squares hierarchy \cite{henrion2020moment}. This approach utilizes a nested sequence of \textit{semidefinite optimization problems} (SDP), i.e., linear optimization problems including linear matrix inequalities which constrain a linear matrix combinations of variables to be positive semidefinite. The $j$-th SDP, where $j$ is the \textit{order} of the relaxation, is in dimension $d^j<d^{j+1}$:
\begin{equation}
\label{eq:sdp}
    \begin{aligned}
        & \min_{y} && \sum_{i}c_i y_i\\
        & \text{\ \ s.t.} && A^j_0+\sum_{i}y_iA^j_i\in\mathbb{S}_+^{d^j}.
    \end{aligned}
\end{equation}
Problem \eqref{eq:sdp} is a convex relaxation of problem \eqref{eq:poly_opt}, whose solution gives an upper bound for the optimal value of \eqref{eq:poly_opt}. Increasing the size of the relaxation, the related bound for the optimal value of the polynomial problem can only improve. Furthermore, generically, there always exists a problem of the hierarchy of relaxations whose optimal value coincides with the optimal value of the original problem. In general, it is not possible to know \textit{a priori}, which order of the hierarchy yields such an exact relaxation, but spectral conditions are available to certify that a certain instance allows for the first applicable order of the hierarchy to be used \cite{lavaei2012zero} and to certify that a certain relaxation is exact \cite{molzahn2013sufficient,Ghaddar2015}, allowing to design algorithms that solve a sequence of increasing size of semidefinite problems and stop in a finite number of steps, returning the optimal value of \eqref{eq:poly_opt} and the optimizer when the conditions cited are met.
The convenience of such algorithms is based on the complexity of solving a semidefinite optimization problem, for which an accurate-as-desired solution can be found in a time that is polynomial in the size of the inputs.

\section{Time-Varying Polynomial Optimization and Time-Varying Semidefinite Programming: An Inventory of Behaviors and Approaches}
\label{sec:defns}

However, in constrained time-varying optimization, non-trivial complications may arise. 
These concern the geometry of the trajectory of their solutions and are intrinsic to time-varying semidefinite programming and time-varying polynomial optimization. 
Suppose that for a given sequence of times $T=\{t_0,\dots,t_k,\dots,t_{f}\}$ indexed by $k$, we are interested in solving a \textit{time-varying POP} depending on the time index $k$ of the form
\begin{equation}
\label{eq:tv_poly_opt}
\tag{TV-POP}
    \begin{aligned}
        &\min &&p^k_0(x)&&\\
        &\mathrm{\ \ s.t.} &&p_i^k(x)\ge 0,&& i\in \mathcal{I},\\
        &&&x\in\mathbb{R}^N.&&
    \end{aligned}
\end{equation}
While our understanding of \eqref{eq:tv_poly_opt} is not complete, yet, 
under certain favorable conditions, we may be able to replace \eqref{eq:tv_poly_opt} with a time-varying version of \eqref{eq:sdp}, which we refer to as TV-SDP. This is possible by means of the moment-sum-of-squares hierarchy \cite{henrion2020moment}
(in the particular case of ACOPF in \eqref{Pmg}, 
the conditions are well understood \cite{lavaei2012zero}). 
Fortunately, the behaviors of TV-SDP solution trajectories have been recently characterized \cite{bellon2021time},
and the behaviors of TV-SDP serve as a subset of behaviors of \eqref{eq:tv_poly_opt}.
Table \ref{tab:tvsdp_behaviors} shows that even under quite restrictive hypotheses, bad behavior can occur.

For simplicity, let us consider the continuous-time setting first and let us define the terms used in Table \ref{tab:tvsdp_behaviors}. The Linearly Independent Constraints Qualification (LICQ) assumption requires that the linear constraints defining the feasible region of the problem should be linearly independent. This assumption is almost costless, as one can always eliminate redundant constraints without any loss of generality. It is indeed a standard hypothesis. 
Assuming continuity of data with respect to the time parameter excludes some application cases (e.g., the presence of deadbands), but is a sensible hypothesis in a study framework, like the one of this chapter. It ensures, for simplicity's sake, that an observed irregular behavior does not come from the parametrization of the problem data.
Slater's condition requires the existence of a strictly feasible point, that is, a point in the relative interior of the feasible set \cite{goldfarb1999parametric, ahmadi2021time}. If such a condition does not hold, there are two irregular behaviors that one might observe. The first one (e.) is that the optimum might be unattained. This means that there is a sequence of feasible points with values converging to the optimal value, but there is no feasible point such that its value is the optimal one. In other words, the optimal value is a supremum and not a maximum. The second one (f.) is that the duality gap, the absolute difference between the primal and dual optimal value is strictly positive \cite[Section 2.3]{drusvyatskiy2017many}. This latter phenomenon may prevent one to use primal-dual methods. Because of this, assuming Slater's condition is standard practice, as it avoids such phenomena (see the last two rows in Table \ref{tab:tvsdp_behaviors}). Yet, other type of irregularities are not ruled out by this condition, even when one keeps on assuming LICQ and the continuity of the data.
The first possible issue is (a.) a loss of smoothness of the curve drawn by the solution to TV-SDP. Notice that the cause of this phenomenon is geometric in nature, as a consequence of the feasible region structure; not necessarily because of a loss of smoothness in the time dependence. More dramatic behaviors involve a change in the dimension of the optimal set. In particular, when this happens, two cases are possible: (c.) the change of dimension happens in a continuous fashion, creating continuous bifurcations in the trajectory of solutions; (b.) the change of dimension causes a  discontinuity in the trajectory of solutions \cite{hauenstein2019computing}. Here we are using the Painlevé-Kuratowsky continuity notion, which extend the usual continuity notion to set-valued function.
As an extreme case, one can even observe (d.) points in time whose neighborhood of any duration always comprises a change-of-dimension point. 
Cases (c.) and (d.) can be ruled out by assuming the existence throughout the time parametrization interval of a \textit{non-singular point}, a point at which one can use the implicit function theorem on a subset of the optimality conditions, exploiting the fact that any constructible
set is either a finite set or the complement of a finite set \cite{bellon2021time}.
However, situations (a.) and (b.) can show up even in a best-looking set up.
We refer to a companion paper \cite{bellon2021time} for formal definitions and further details
and to Table~\ref{tab:translation} for a rough translation of these terms to power-engineering language.

Next, let us consider a discrete-time setting. Indeed, we are dealing with a discrete sequence of instances of TV-SDP, in practice, 
where each member of the sequence is an instance corresponding to a fixed time instant. 
The assumption of continuity of the trajectory of the inputs above (and in Table \ref{tab:tvsdp_behaviors}) needs to be ``discretized'', 
by bounding the change from one instance to the next one. This can be done in three ways: 
bounding a norm of the change in inputs, 
bounding a norm of the change changes in the objective function, 
and bounding a norm of the change in the Lagrangian. 
By bounding the norm of changes in the Lagrangian, 
we are making sure that the change in the optimal value must be bounded,
but we allow for the infeasibility introduced by the changing inputs.
This last option seems more natural and  more general than the previous two, as the Lagrangian captures the essence of an optimization problem and makes it possible to measure the tracking performance (see Theorem \ref{theorem.inexact}). 

Let us now consider three approaches to solving 
\textit{time-varying POP} 
\eqref{eq:tv_poly_opt}.

\subsection{Repeated Solving of the Time-Invariant Problem}

A straightforward approach would iteratively solve an off-line optimization problem at a fixed frequency. 
One could discard the information concerning the preceding steps, or not. 
In the former case, this approach may consume more computational resources than necessary. 
In the latter case, this approach is known as warm-starting \cite{Gondzio1998,Yildirim2002,Colombo2011}.
Much of the general-purpose work in this has focused on the use of 
interior-point methods \cite{Gondzio1998,Colombo2011}, 
where a small number of computationally-demanding iterations suffice \cite{Gondzio2012} to reach machine precision. 
While we are not aware of any work on warm-starting SDP solvers, 
one could hypothetically consider a fixed step in the moment-sum-of-squares hierarchy \cite{henrion2020moment,Ghaddar2015} without a warm start.
In most of the real-world applications, this would, however, result in an overly expensive procedure. 
This suggests that one should instead consider a path-following strategy.

\subsection{Path-following for Convex Approximations}

The path-following approach makes use of local information on the current problem instance to predict a solution for the next problem instance after a sufficiently small time step. 
This predicting procedure is often combined with a corrector step exploiting the information of the new problem instance to correct the solution predicted by the predictor step. Procedures known as predictor-corrector are able to merge these two steps in one, where the step to the new approximate solution is found solving a convex quadratic problem \cite{kungurtsev2014sequential}. 
A number of papers, for example \cite{Bolognani2015, 7480375,7842813,7859385,8013070,Bolognani2016, Hauswirth2017,ORTMANN2020,PICALLO2020}, have focussed  
 on linearizations of the OPF problem and path-following procedures therein, possibly employing feedback to correct for model mismatches and linearization errors. 
While we are not aware of any work on path-following for time-varying SDP (TV-SDP), 
one could hypothetically employ path-following to the moment-sum-of-squares hierarchy, possibly at a fixed step therein, again. 
This suggests to apply the Lasserre's hierarchy in a time-varying framework. This, however, might lead to complications that limit the ability to track the solution to time-varying SDP using path-following strategies as developed in \cite{Bellon2022}.
As discussed above in this section, even in a best-case scenario, the trajectory of the solution may lose smoothness or even continuity, making it hard or impossible to use local information.

\subsection{Path-following under Polyak\,--\,\L{}ojasiewicz inequality}
\label{sec:PLapp}

Finally, one could go beyond the convex problems. Within power systems, gradient methods \cite{Elia-Allerton13,XIE2022107859}, Newton method, and L-BFGS \cite{7929408}   
have been applied to the general non-convex problem, without guarantees. 
Guarantees are, however, possible under assumptions on the behavior of the objective function around local optima.
In particular, the so called Polyak\,--\,\L{}ojasiewicz (PL) inqualities \cite{Polyak63,lojasiewicz1963propriete} concern the growth of the gradient around local optima. 
Functions satisfying the PL inequality are neither a subset nor a superset of convex functions \cite[Section 5.1]{bolte2021curiosities}.
Since 1960s \cite{Polyak63,lojasiewicz1963propriete},
a number of variants have been introduced \cite[e.g.]{kurdyka1998gradients,karimi2016linear}.
The so-called Kurdyka\,--\,{\L}ojasiewicz variant \cite{kurdyka1998gradients} is known to be satisfied 
by any semiagebraic set, including \eqref{eq:tv_poly_opt}.
A stricter, but particularly suitable variant is the local proximal PL inequality (cf. Assumption \ref{asmPL} below).
Luo-Tseng error bounds \cite{tseng2010approximation} could also be seen as a variant. 
Below, we show that this approach, even if restrictive, presents rather a solid alternative. 

\begin{table}[]
    \centering
    \begin{tabular}{c|c}
    \midrule[1pt]
    \textbf{CQ} & \textbf{Power engineering} \\ 
    \midrule[1pt]
        LICQ & Pre-processing applied \\
        Continuity & No deadband etc. \\
        Slater's &  Strictly feasible power flow\\
        
        \midrule 
        Invexity & Holds at least for 3 buses \cite{bestuzheva2019invex} \\
        Convexity & In radial networks or without losses \\
        \midrule 
    KL Inequality & Satisfied \cite{kurdyka1998gradients} \\
    PL Inequality (Ass. \ref{asmPL}) & Empirical evidence \cite{bestuzheva2019invex} \\
    \midrule[1pt]
    \end{tabular}
    \caption{Translation of the mathematical properties of the time-varying problems to power-engineering language 
    }
    \label{tab:translation}
\end{table}

\section{One Approach Elaborated: \\
a Randomized Coordinate-Descent Algorithm}
Our approach elaborates upon the path-following under the PL inequality sketched out in Section \ref{sec:PLapp} above.
It is based on first-order methods, namely a randomized coordinate-descent algorithm, applied to the Lagrangian relaxation of \eqref{lifted}. We begin by setting $\xi:= (x, t, h, g, z, \lambda^t, \lambda^g, \lambda^h, \lambda^z)\in\R^d$ for a suitable dimension $d\in\mathbb{N}$, where the $\lambda$s are the Lagrangian multipliers relative to constraints (\ref{lifted}b-c-d-f), and by considering
\begin{align*}
& \cL( \xi, \mu ):=
\\
& \sum_{i\in\mathcal{G}} \left\{ c_i [P_{l,i} +\tr(Y_i xx^T) ]^2 
+ d_i [Q_{l,i}  +\tr(\bar Y_i xx^T) ]^2\right\}
\\
& - \sum_{i\in\mathcal{N}} \lambda_i^t \left[\tr(Y_i xx^T)  - t_i\right]
+ \frac\mu2\sum_{i\in\mathcal{N}} \left[\tr(Y_ixx^T)-t_i\right]^2
\\
& - \sum_{i\in\mathcal{N}} \lambda_i^g \left[\tr(\bar Y_i xx^T)  - g_i \right] 
+ \frac\mu2\sum_{i\in\mathcal{N}} \left[\tr(\bar Y_i xx^T) -g_i\right]^2
\\
& - \sum_{i\in\mathcal{N}} \lambda_i^h \left[\tr(M_i xx^T) - h_i \right] 
+ \frac\mu2\sum_{i\in\mathcal{N}} \left[\tr(M_i xx^T)-h_i \right]^2\\
& - \sum_{i\in\mathcal{G}} \lambda_{i}^z \left[(t_i+P_{l,i})^2 +(g_i+Q_{l,i})^2 - z_{i}\right]   
\\
& + \frac\mu2\sum_{i\in\mathcal{G}} \left[(t_i+P_{l,i})^2  + (g_i+Q_{l,i})^2 - z_{i}\right]^2.
\end{align*} 
This augmented Lagrangian is deeply related with SDP relaxations \cite{lavaei2012zero,Ghaddar2015}, where $xx^T$ is replaced by a matrix $X$, which is then required to satisfy the positive semidefiniteness constraint $X \succeq 0$,
as described in \cite{lavaei2012zero,Marecek2017}.
We optimize $\cL$ on the polyhedral feasible set $\cY\subset\mathbb{R}^d$ defined by the inequalities:
\begin{align}
&V_{min}^2\leq h_i \leq V_{max}^2, &&  i\in\mathcal{N},
\\
&z_i \leq S_i^2, &&  i\in\mathcal{G}.
\\
-&P_{l,i} \leq  t_i \leq P_{pv}- P_{l,i}, &&  i\in\mathcal{G},
\end{align} 
We further define $\chi$ as the indicator function of $\cY$, so that $\chi(\xi)$ is zero if $\xi\in\cY$ and infinity otherwise.

Denoting an initial point as $\xi^0$ and 
the $k$-th iterate as $\xi^k$, we update the $i^k$-th coordinate of the next iterate $\xi^{k+1}_{i^k}$ by
\begin{align}\label{eq:updateopt_compact}
\hspace{-0.28cm}\arg \min_{\alpha \in \mathbb{R}}\hspace{-0.05cm} \left [ \hspace{-0.02cm}\alpha \nabla_{i^k} \cL(\xi^k\hspace{-0.05cm},\mu) \hspace{-0.05cm}+\hspace{-0.05cm} \frac{L }{ 2}\alpha^2 \hspace{-0.05cm}+\hspace{-0.05cm} \chi_{i^k}(\xi_{i^k} \hspace{-0.05cm}+ \alpha) \hspace{-0.05cm}-\hspace{-0.05cm} \chi_{i^k}(\xi_{i^k}) \hspace{-0.05cm}\right ]\hspace{-0.05cm},
\end{align}
where $\nabla_{i} \cL$ is the restriction of the gradient to coordinate $i$ and $\chi_{i}$ is the coordinate-wise indicator function.
This can be seen as coordinate-wise minimization applied to problem
\begin{align}
\label{f+g}
\arg \min_{\xi^k} \cL(\xi^k, \mu) + \chi(\xi^k).  
\end{align}
We adopt the following procedure
\begin{algorithm}[t]
\caption{A randomized coordinate-descent algorithm for ACOPF Pursuit}
 \textbf{Input:} \textrm{data for} \eqref{lifted} \textrm{at each} $\{k\tau\}_{k\in\{0,\dots,K\}
}$, initial point $\xi_0$ \\
 \textbf{Output:} \textrm{sequence of solutions} $\Xi=\{\xi_k\}_{k\in\{0,\dots,K\}
}$\\
 \textbf{initialize} $\Xi=\{\xi_0\}$, \textrm{choose} $\mu\in[0,\bar \mu]$ \;
  \textbf{for} $k = 0$ to $K$ \;
  \quad \textrm{choose} $i^k\in\{1,\dots,d\}$  \textrm{with uniform probability} \;
  \quad \textrm{solve} \eqref{eq:updateopt_compact}, \textrm{obtain} $\xi^{k+1}_{i^k}$ \textrm{and set} $\xi^{k+1}_{j}=\xi_j^{k} \text{ for }j\neq i^k$ \;
  \quad \textrm{append} $\xi^{k+1}$ to $\Xi$ \;
 \textbf{return} $\Xi$ \;
\end{algorithm}
\noindent
As a crucial observation, \eqref{eq:updateopt_compact} admits a closed-form solution.
Indeed, since $\cL$ is a degree-4 polynomial in $x$,
the optimality conditions are cubic, and hence each root of the uni-variate problem has a closed-form.
One can consequently enumerate these finite solutions and choose the one realizing the minimum.
As for the other variables, $\cL$
is at most quadratic on $\cY$.
On the one hand this allows the analysis of the per-iteration complexity, as presented in Section~\ref{sec:periteration}; on the other hand, it implies computational performance that are possibly excellent.

\section{Tracking errors and convergence rates}

We begin by considering the properties of the 
Lagrangian.

\begin{lemma}[Lipschitz continuity of the Lagrangian gradient]
\label{lemma-Phi}
Let $B_r(\xi^*) \subset \mathbb{R}^d$ be a Euclidean ball with center $\xi^*$
and finite radius $r$.
Then 
$\nabla_\xi \cL$ is coordinate-wise Lipschitz continuous on $B_r(\xi^*)$, i.e.,
there exists a constant $L$ such that for every
$\alpha \in \mathbb{R}$,
$\xi \in B_r(\xi^*)$, and for any index $i \in \{1,\dots,d\}$ 
such that $\xi + \alpha e_i \in B_r(\xi^*)$, 
the following upper-bound is satisfied
\begin{align}
\label{lip_coo}
 \cL(\xi + \alpha e_i, \mu) \leq \cL(\xi, \mu) + \alpha \nabla_i \cL(\xi, \mu) + \frac{L }{ 2} \alpha^2,
\end{align}
where $e_i$ is the $i$-th unit vector. 
\end{lemma}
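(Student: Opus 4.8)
The plan is to exploit that, for each fixed value of the penalty parameter $\mu \in [0,\bar\mu]$, the map $\xi \mapsto \cL(\xi,\mu)$ is a multivariate polynomial of total degree four: it is quartic in $x$, at most quadratic in each of the blocks $t,g,h,z$, and affine in the multipliers $\lambda^t,\lambda^g,\lambda^h,\lambda^z$. Hence $\cL(\cdot,\mu)$ is $C^\infty$, and inequality \eqref{lip_coo} is just the second-order descent inequality, which will follow once we bound the pure second partial derivatives $\partial^2_{\xi_i}\cL$ along coordinate directions inside the ball $B_r(\xi^*)$.

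First I would fix $\xi \in B_r(\xi^*)$, an index $i \in \{1,\dots,d\}$, and a scalar $\alpha$ such that $\xi + \alpha e_i \in B_r(\xi^*)$, and introduce the univariate restriction $\phi(s) := \cL(\xi + s e_i,\mu)$. Since $B_r(\xi^*)$ is convex, the whole segment $\{\xi + s e_i : s \in [0,\alpha]\}$ lies in $B_r(\xi^*)$. The function $\phi$ is a univariate polynomial of degree at most four, hence $C^2$, and Taylor's theorem with Lagrange remainder yields $\phi(\alpha) = \phi(0) + \alpha\phi'(0) + \tfrac{\alpha^2}{2}\phi''(\zeta)$ for some $\zeta$ between $0$ and $\alpha$. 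As $\phi'(0) = \nabla_i \cL(\xi,\mu)$ and $\phi''(\zeta) = \partial^2_{\xi_i}\cL(\xi + \zeta e_i,\mu)$, the claim \eqref{lip_coo} holds as soon as we take
\[
L \ \ge\ \sup_{\eta \in B_r(\xi^*)}\ \max_{1 \le i \le d}\ \bigl|\,\partial^2_{\xi_i}\cL(\eta,\mu)\,\bigr|.
\]

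It remains to show this supremum is finite, and, if one wishes, to make it explicit. For each $i$ the function $\eta \mapsto \partial^2_{\xi_i}\cL(\eta,\mu)$ is itself a polynomial of total degree at most two (two differentiations lower the degree of the quartic $\cL$ by two), hence continuous; since $B_r(\xi^*)$ is compact, each $\sup_\eta |\partial^2_{\xi_i}\cL(\eta,\mu)|$ is attained and finite, and the maximum over the finitely many coordinates gives a finite $L$. For a closed form one expands the squared terms: the $x$-coordinates produce second partials controlled by $\max_i\|Y_i\|$, $\max_i\|\bar Y_i\|$, $\max_i\|M_i\|$, by $\max_i c_i$, $\max_i d_i$, by the loads $P_{l,i},Q_{l,i}$, by $\bar\mu$, and by $r + \|\xi^*\|$ (a bound on $\|\eta\|$ over the ball); the $t$- and $g$-coordinates produce second partials that are at most quadratic in $\eta$ (through the quartic penalty $\tfrac{\mu}{2}[(t_i+P_{l,i})^2 + (g_i+Q_{l,i})^2 - z_i]^2$) and hence bounded on the ball; the $h$- and $z$-coordinates give $\partial^2 \cL = \mu \le \bar\mu$; and the $\lambda$-coordinates give $\partial^2 \cL = 0$ since $\cL$ is affine in each multiplier. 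Collecting these estimates yields an explicit $L = L(r,\xi^*,\bar\mu,\text{data})$.

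The step I expect to demand the most care is the bookkeeping in the last paragraph: for each coordinate block, tracking exactly which monomials of $\cL$ survive two differentiations and bounding their coefficients uniformly in $i$, so that one constant $L$ serves all $d$ coordinates simultaneously. Everything else is routine — convexity of the ball makes the segment argument immediate, and the passage from \eqref{lip_coo} to a bound on a univariate second derivative is the standard descent lemma. I would also note that, because only the pure second partials enter here, the coordinate-wise constant $L$ is no larger than the global gradient-Lipschitz constant $\sup_{B_r(\xi^*)}\|\nabla^2_\xi\cL\|$, which is the quantity more commonly quoted.
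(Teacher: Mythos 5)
Your proof is correct and follows essentially the same route as the paper: the paper also defines $L$ as the maximum over coordinates and over the closed ball of $\bigl|\partial^2 \cL(\xi,\mu)/\partial \xi_i^2\bigr|$, justified by the fact that these second derivatives are polynomials attaining their maximum on the compact ball. Your additional steps (the univariate Taylor expansion with Lagrange remainder and the block-by-block bookkeeping of which second partials survive) merely make explicit what the paper leaves implicit, so no substantive difference exists.
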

\begin{proof}
For fixed $\mu$, the function $\cL(\xi, \mu)$ is a polynomial function of $\xi$. We can then simply set
\[
L := 
\max_{\substack{i\in\{1,\dots,d\},\\ \xi \in B_r(\xi^*)}}  \left| \frac{\partial^2 \cL(\xi,\mu)}{\partial \xi_i^2}  \right|,
\]
which is well-defined, as the second-order derivatives of $\cL$ are polynomials, admitting maximum on the closed ball $B_r(\xi^*)$.
\end{proof}

Let us now fix a local minimizer $\xi^*$ for $\cL(\xi,\mu)$ for fixed $\mu$. 
Throughout this paper, we assume that $\mu \in [0, \bar \mu]$, for some $\bar \mu$.
Furthermore, based on \cite{Polyak63,lojasiewicz1963propriete,karimi2016linear},
we make an assumption relating the growth of the gradient to sub-optimality.
\begin{assumption}[Local proximal PL inequality]
\label{asmPL}
Given a local minimizer $\xi^*$, 
and a fixed $\mu \in [0,\bar \mu]$, there 
exists a finite radius $r>0$ and a constant $\sigma_\cL > 0$ such that the map $\nabla \cL$ satisfies the local proximal 
PL inequality, i.e., the following inequality holds for every $\xi \in B_r(\xi^*)$ 
\begin{equation}\label{prox-pl}
\frac 1 2\mathcal{D}_\chi(\xi, L) \geq   \sigma_\cL [\cL(\xi,\mu) - \cL(\xi^*,\mu)],
\end{equation} 
where $\chi$ is the indicator function as in \eqref{f+g} and $\mathcal{D}_\chi(\xi, \alpha)$ is defined as
\begin{equation}
-2\alpha \min_{\xi'} \left[ \langle \nabla \cL(\xi,\mu) , \xi' - \xi \rangle + \frac{\alpha}{2}|| \xi' - \xi ||^2 + \chi(\xi') - \chi(\xi) \right].
\nonumber
\end{equation}
\end{assumption}

A global convergence analysis is non-trivial, due to the non-convexity of the Lagrangian, \cite{lavaei2012zero,Ghaddar2015} and requires either additional assumptions \cite{lavaei2012zero} or the employment of SDP relaxations \cite{Ghaddar2015} (for examples of their usage see, e.g., \cite{Marecek2017,boumal2016}).
For the latter case, even if the associated assumptions are in general well-known \cite{Burer2003,burer2005}, in particular for the analysis of power systems \cite{lavaei2012zero,Marecek2017},
they are somehow too technical (cf. Theorem 4.1 in \cite{burer2005}). 
Based on these considerations, in this work we limit ourselves to the analysis of local convergence.

We first show that in the case of time-invariant input, under Assumption~\ref{asmPL}, the randomized coordinate-descent algorithm exhibits a linear rate of convergence \cite{karimi2016linear}.

\begin{theorem}[Extension of Theorem 6 in \cite{karimi2016linear}]
\label{th:lin_coo}
Let $\mu \in [0,\bar \mu]$ be fixed and
$\xi^*$, $r$, and $\sigma_\cL\leq dL$ be such that 
Assumption~\ref{asmPL} holds, and $\xi^0, \xi^1, \dots \in B_r(\xi^*)$.
Then the randomized coordinate-descent algorithm \eqref{eq:updateopt_compact}, with $i^k$ being chosen at each iteration uniformly at random from $\{1,\dots,d\}$, 
for solving \eqref{f+g}
enjoys the linear convergence rate
\begin{align}
\label{eq:local_linear_cvrg_rate}
\mathbb{E}[ \cL(\xi^k,\mu) - \cL^*] \leq \left( 1 - \frac{\sigma_\cL }{ d L}\right)^k[ \cL(\xi^0,\mu) - \cL^*],
\end{align}
where $L$ is as defined in Lemma~\ref{lemma-Phi} and $\cL^* : = \cL(\xi^*,\mu)$.
\end{theorem}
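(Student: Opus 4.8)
The plan is to follow the template of Theorem~6 in \cite{karimi2016linear}, adapting it to the randomized coordinate iteration \eqref{eq:updateopt_compact}. Two ingredients carry the argument: the coordinate-wise sufficient-decrease estimate \eqref{lip_coo} from Lemma~\ref{lemma-Phi}, and the local proximal PL inequality \eqref{prox-pl} from Assumption~\ref{asmPL}. Before starting I would record two structural facts. First, $\chi$ is \emph{separable} across coordinates, $\chi(\xi)=\sum_{j=1}^d\chi_j(\xi_j)$, because $\cY$ is a box described by constraints on the individual entries $h_i$, $z_i$, $t_i$; this is precisely what lets a collection of one-dimensional proximal line-searches be reassembled into a single $d$-dimensional proximal step. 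Second, the iterates stay feasible: $\alpha=0$ is always admissible in \eqref{eq:updateopt_compact} (its objective there equals $0$), so if $\xi^0\in\cY$ then by induction $\xi^k\in\cY$, hence $\chi(\xi^k)=0$ for all $k$.

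\textbf{Per-iteration decrease and averaging over the coordinate.} Fix $k$ and condition on $\xi^k$. For the sampled coordinate $i^k$, let $\alpha^k$ be the minimizer defining $\xi^{k+1}$ in \eqref{eq:updateopt_compact}. Bounding $\cL(\xi^k+\alpha^k e_{i^k},\mu)$ by \eqref{lip_coo}, then invoking the optimality of $\alpha^k$ in \eqref{eq:updateopt_compact} together with separability of $\chi$, one obtains $\cL(\xi^{k+1},\mu)+\chi(\xi^{k+1})\le\cL(\xi^k,\mu)+\chi(\xi^k)+\min_{\alpha\in\mathbb{R}}[\alpha\nabla_{i^k}\cL(\xi^k,\mu)+\tfrac{L}{2}\alpha^2+\chi_{i^k}(\xi^k_{i^k}+\alpha)-\chi_{i^k}(\xi^k_{i^k})]$. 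Taking the expectation over $i^k$ drawn uniformly from $\{1,\dots,d\}$, the last term averages to $\tfrac1d\sum_{i=1}^d\min_{\alpha}[\cdots]$; by separability of $\chi$ this sum of one-dimensional minima equals the single $d$-dimensional minimum $\min_{\xi'}[\langle\nabla\cL(\xi^k,\mu),\xi'-\xi^k\rangle+\tfrac{L}{2}\|\xi'-\xi^k\|^2+\chi(\xi')-\chi(\xi^k)]$, which is exactly $-\tfrac{1}{2L}\mathcal{D}_\chi(\xi^k,L)$ by the definition of $\mathcal{D}_\chi$ at $\alpha=L$. Using $\chi(\xi^k)=\chi(\xi^{k+1})=0$ this yields
\[
\mathbb{E}_{i^k}\big[\cL(\xi^{k+1},\mu)\big]\ \le\ \cL(\xi^k,\mu)-\tfrac{1}{2dL}\,\mathcal{D}_\chi(\xi^k,L).
\]

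\textbf{PL inequality and recursion.} Since $\xi^k\in B_r(\xi^*)$, Assumption~\ref{asmPL} gives $\mathcal{D}_\chi(\xi^k,L)\ge 2\sigma_\cL[\cL(\xi^k,\mu)-\cL^*]$. Substituting this into the previous display and subtracting $\cL^*$ from both sides,
\[
\mathbb{E}_{i^k}\big[\cL(\xi^{k+1},\mu)-\cL^*\big]\ \le\ \Big(1-\tfrac{\sigma_\cL}{dL}\Big)\big[\cL(\xi^k,\mu)-\cL^*\big],
\]
and the hypothesis $\sigma_\cL\le dL$ keeps the contraction factor in $[0,1)$. Taking full expectations and iterating this bound via the tower property over $k$ steps produces \eqref{eq:local_linear_cvrg_rate}, with $\cL^*=\cL(\xi^*,\mu)$ and $L$ as in Lemma~\ref{lemma-Phi}.

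\textbf{Expected main obstacle.} The crux is the identity equating $\tfrac1d\sum_i\min_{\alpha_i}[\cdots]$ with $-\tfrac{1}{2dL}\mathcal{D}_\chi(\xi^k,L)$, which rests entirely on the separability of $\chi$; I would verify this carefully against the explicit description of $\cY$. A secondary technicality is that \eqref{lip_coo} and \eqref{prox-pl} are only available on $B_r(\xi^*)$: one applies them at the iterates through the standing hypothesis that the trajectory stays in the ball, and for the averaging step one needs the one-coordinate candidate updates to remain in the ball for every index $i$, which is handled by a minor extension of that hypothesis (or by localizing each line-search in \eqref{eq:updateopt_compact}, which still contains $\alpha=0$ so feasibility is preserved). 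Beyond this, all steps are routine.
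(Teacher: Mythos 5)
Your proposal is correct and follows essentially the same route as the paper, which simply defers to Theorem~6 of \cite{karimi2016linear}: your reconstruction (coordinate-wise Lipschitz decrease from Lemma~\ref{lemma-Phi}, averaging over the uniformly sampled coordinate, separability of $\chi$ over the box $\cY$ to recover $-\tfrac{1}{2dL}\mathcal{D}_\chi(\xi^k,L)$, then Assumption~\ref{asmPL} and the tower property) is exactly the cited argument. Your added care about iterates and one-coordinate candidates remaining in $B_r(\xi^*)$ is a fair localization point that the paper leaves implicit.
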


The proof directly follows from \cite{karimi2016linear}. 

We now bound the tracking error 
in the case of time-varying input, when $\cL$ changes over time.
In this case, we run a single iteration of our algorithm for each time step, 
before obtaining new inputs. 
In the following, $\cL^k(\xi,\mu)$ denotes the time-varying $\cL$ at each time $k$,
and we assume that the changes of $\cL$ are uniformly bounded.

\begin{assumption}\label{as:varying}
The change of the value of function $\cL^k$ at two subsequent instants $k-1$  and $k$ of time is bounded from above:
$$
|\cL^{k}(\xi,\mu) - \cL^{k-1}(\xi,\mu)| \leq e, \quad\textrm{for all } \xi \in \cY
$$
for all instants $k$. 
\end{assumption}


We are now ready to measure the performance of the tracking using the randomized coordinate-descent algorithm in the time-varying case.

\begin{theorem}
\label{theorem.inexact}
Let $\mu \in [0,\bar \mu]$ be fixed and
$\xi^{*,k}$, $r$, and $\sigma_\cL$ be such that 
Assumption~\ref{asmPL} and the Lipschitz condition~\eqref{lip_coo} are satisfied, uniformly in time. Furthermore, let Assumption~\ref{as:varying} hold and $\xi^0, \xi^1, \dots \in B_r(\xi^*)$.
Then the randomized coordinate-descent algorithm \eqref{eq:updateopt_compact} for solving \eqref{f+g} with $\cL^k(\xi,\mu)$ instead of $\cL(\xi,\mu)$, where $i^k$ is chosen uniformly at random from $\{1,\dots,d\}$, converges with linear rate to an error upper-bound
\begin{multline}
\label{eq:tracking_error}
\mathbb{E}[ \cL^{k}(\xi^k,\mu) - \cL^{*,k}] \\
\leq \left( 1 - \frac{\sigma_{\cL}}{ d L}\right)^k[ \cL^{0}(\xi^0,\mu) - \cL^{*,0}] + \frac{2e\cdot dL}{ \sigma_{\cL}}.
\end{multline}
Furthermore, the tracking error is
\begin{align}\label{eq.asympt_error}
\limsup_{k \to \infty}\mathbb{E}[ \cL^{k}(\xi^k,\mu) - \cL^{*,k}] \leq \frac{2e\cdot dL}{\sigma_{\cL}}.
\end{align}
\end{theorem}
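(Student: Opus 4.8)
The plan is to combine the one-step descent guarantee that underlies Theorem~\ref{th:lin_coo} with the uniform bound on the drift of $\cL^k$ from Assumption~\ref{as:varying}, and then to unroll the resulting recursion. First I would establish the one-step contraction for a \emph{fixed} instance: exactly as in the proof of Theorem~\ref{th:lin_coo} (following \cite{karimi2016linear}), taking expectation over the uniformly chosen coordinate $i^k$ in the coordinate update \eqref{eq:updateopt_compact} and using the coordinate-wise Lipschitz bound \eqref{lip_coo} together with the local proximal PL inequality \eqref{prox-pl}, one gets, for the instance at time $k$,
\begin{equation}
\label{eq:onestep}
\mathbb{E}\!\left[\cL^{k}(\xi^{k+1},\mu)\,\middle|\,\xi^k\right] - \cL^{*,k}
\le \left(1 - \frac{\sigma_\cL}{dL}\right)\!\left[\cL^{k}(\xi^k,\mu) - \cL^{*,k}\right],
\end{equation}
valid as long as all iterates stay in $B_r(\xi^{*,k})$, which is part of the hypothesis.

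Next I would pass from the instance at time $k$ to the instance at time $k+1$. The iterate $\xi^{k+1}$ was produced using the data at time $k$, but it is evaluated against $\cL^{k+1}$ in the next round. Assumption~\ref{as:varying} gives $|\cL^{k+1}(\xi,\mu) - \cL^{k}(\xi,\mu)| \le e$ for every $\xi \in \cY$; applying this both at $\xi^{k+1}$ and at the optimizers, and noting that $\cL^{*,k}$ and $\cL^{*,k+1}$ are minima of functions that differ pointwise by at most $e$ on $\cY$ (so $|\cL^{*,k+1} - \cL^{*,k}| \le e$), we obtain
\begin{equation}
\label{eq:shift}
\cL^{k+1}(\xi^{k+1},\mu) - \cL^{*,k+1}
\le \left[\cL^{k}(\xi^{k+1},\mu) - \cL^{*,k}\right] + 2e .
\end{equation}
Combining \eqref{eq:onestep} and \eqref{eq:shift}, writing $\delta_k := \mathbb{E}[\cL^{k}(\xi^k,\mu) - \cL^{*,k}]$ and $q := 1 - \sigma_\cL/(dL)$, and taking total expectation yields the scalar recursion $\delta_{k+1} \le q\,\delta_k + 2e$.

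Finally I would solve this linear recursion: iterating gives $\delta_k \le q^k \delta_0 + 2e\sum_{j=0}^{k-1} q^j \le q^k\delta_0 + \frac{2e}{1-q}$, and since $1-q = \sigma_\cL/(dL)$ we get $\frac{2e}{1-q} = \frac{2e\cdot dL}{\sigma_\cL}$, which is exactly \eqref{eq:tracking_error}. Letting $k\to\infty$ and using $q \in (0,1)$ (which needs $\sigma_\cL \le dL$, consistent with the hypothesis of Theorem~\ref{th:lin_coo}) kills the first term and leaves $\limsup_{k\to\infty}\delta_k \le \frac{2e\cdot dL}{\sigma_\cL}$, which is \eqref{eq.asympt_error}.

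The main obstacle — and the place where the argument is really only "local" — is the standing requirement that the whole trajectory $\xi^0,\xi^1,\dots$ remains inside the balls $B_r(\xi^{*,k})$ on which both Assumption~\ref{asmPL} and \eqref{lip_coo} hold; as the optimizer $\xi^{*,k}$ itself moves with $k$, one would in a fully rigorous treatment need to argue that the contraction in \eqref{eq:onestep} pulls iterates in faster than the drift $2e$ pushes them out, i.e.\ that the residual ball of radius $\sim \frac{2e\cdot dL}{\sigma_\cL}$ around the moving optimizer is itself contained in the region of validity. Here this is simply assumed (the hypothesis "$\xi^0,\xi^1,\dots \in B_r(\xi^*)$" and "uniformly in time"), so the remaining work is the routine bookkeeping above; the only mild subtlety is correctly accounting for the factor $2e$ rather than $e$ in \eqref{eq:shift}, coming from the drift of both $\cL^k$ and its optimal value.
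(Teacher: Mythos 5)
Your proposal is correct and follows essentially the same route as the paper's own proof: the one-step contraction inherited from Theorem~\ref{th:lin_coo}, a triangle-inequality shift bounding both the drift of $\cL^k$ at the iterate and of the optimal value by $e$ each (hence the $2e$), and unrolling the resulting recursion with the geometric-series bound $\sum_{j}(1-\sigma_\cL/(dL))^j \le dL/\sigma_\cL$. Your closing remark about the iterates needing to remain in the ball around the moving optimizer is a fair observation, but the paper handles it the same way you do, by hypothesis.
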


\begin{proof}
The result follows from inequality~\eqref{eq:local_linear_cvrg_rate}, together with the triangle inequality and the sum of a geometric series. Omitting for brevity the dependency on $\mu$, for each $k$ we have
\begin{multline}
\label{errorbound2use}
\mathbb{E}[\cL^{k-1}(\xi^k) - \cL^{*,k-1}]
\\
\leq \left(1 - \frac{\sigma_{\cL}}{dL}\right)[\cL^{k-1}(\xi^{k-1}) - \cL^{*,k-1}].
\end{multline}
By summing and subtracting $\mathbb{E}[\cL^{k}(\xi^k)-\cL^{*,k}]$ on the left-hand-side, after trivial manipulation we obtain
\begin{multline}
\mathbb{E}[\cL^{k}(\xi^k) - \cL^{*,k}] \leq \left(1 - \frac{\sigma_{\cL}}{dL}\right)[\cL^{k-1}(\xi^{k-1}) - \cL^{*,k-1}] +
\\
|\mathbb{E}[\cL^{*,k}- \cL^{*,k-1}]|+|\mathbb{E}[\cL^{k}(\xi^k) - \cL^{k-1}(\xi^{k})]|\leq
\\
\left(1 - \frac{\sigma_{\cL}}{dL}\right)^k[\cL^{0}(\xi^{0}) - \cL^{*,0}] +2e\sum_{j=0}^{k-1}\left(1 - \frac{\sigma_{\cL}}{dL}\right)^j
\end{multline}
The last two term can be bounded exploiting Assumption~\ref{as:varying}, and we get~\eqref{eq:tracking_error} from the summation of the partial geometric series: $\sum_{j=0}^{k-1}(1-c)^j=[1-(1-c)^k]/c\leq 1/c$. The asymptotic inequality~\eqref{eq.asympt_error} follows immediately.
\end{proof}
The bound given in~\eqref{eq.asympt_error} quantifies the largest expected discrepancy between the optimum $\cL^{*,k}$ and the approximate optimum
$\cL^k(\xi^k,\mu)$ at iteration (time) $k$, when $k$ tends to infinity. 
More precisely, as time flows, the on-line randomized coordinate-descent algorithm produces a sequence of approximately optimal costs that  asymptotically approach the optimal costs trajectory, converging linearly with a rate that is dependent on the properties of the objective cost function. Notice that the asymptotic bound depends also on the speed at which the problem changes during time. 
If we wanted to run more iterations at each time step $k$, the analysis would be the same as in the off-line case 
(Theorem~\ref{th:lin_coo}) and therefore a tracking error would not be available. 
However, this approach could be impossible in frameworks where inputs change faster than the computation required for a single algorithm iteration. 
 



\section{Per-Iteration Complexity}
\label{sec:periteration}
We now study the complexity of one iteration of the method that we proposed, analyzing the complexity of one epoch of the coordinate-descent algorithm, during which the iterations go over each coordinate $i$ in increasing order.

\begin{lemma}
\label{noFlops}
Let $p$ denote the maximal number of non-zero elements of 
a row of $y$ (in other words, $p$ is the maximal number of nodes that any node can be connected to).
Sequentially visiting each coordinate $i$, the coordinate-descent algorithm
requires $(32p+102) N^2 + (32p+116)N_{\cG}N  - 2N + (16p+92)N_{\cG}$ flops
and $6(N+N_{\cG})$ roots evaluations for a uni-variate cubic polynomial.
The update of a single coordinate requires at most
$16(N+N_{\cG})p+58N_{\cG}+51N-8$ flops and $6N$ root evaluations for a uni-variate cubic polynomial. 
\end{lemma}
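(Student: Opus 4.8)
The plan is to count floating-point operations directly by walking through one epoch of the algorithm, i.e., by processing each coordinate of $\xi = (x, t, h, g, z, \lambda^t, \lambda^g, \lambda^h, \lambda^z)$ in turn, since the per-epoch cost is the sum of the per-coordinate costs and the statement packages both the epoch total and the worst single-coordinate cost. The key structural fact, already noted after the algorithm, is that the subproblem \eqref{eq:updateopt_compact} has a closed form: along the $x$-coordinates the Lagrangian $\cL$ is a degree-$4$ univariate polynomial in $\alpha$ (because each trace term $\tr(Y_i xx^T)$ etc.\ is quadratic in $x$, and these are squared in the objective and the penalty terms), so the stationarity condition is a cubic whose at most three real roots are available in closed form; along the remaining coordinates $t, h, g, z, \lambda^{\bullet}$, the function $\cL$ restricted to that coordinate is at most quadratic, so the minimizer over the interval dictated by $\chi$ is found by solving a linear equation and clipping to the box in $\cY$. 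Thus each coordinate update reduces to (i) assembling the coefficients of the relevant univariate polynomial from the current iterate, (ii) either evaluating the cubic-root formula and comparing the resulting candidate values, or solving a linear equation with a projection, and (iii) writing back the single changed entry.

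The main work is therefore the accounting in step (i): for a given coordinate, I would identify exactly which terms of $\cL$ depend on it and how many arithmetic operations it costs to update the affected quantities $\tr(Y_i xx^T)$, $\tr(\bar Y_i xx^T)$, $\tr(M_i xx^T)$, and the residuals entering the penalty. This is where the sparsity parameter $p$ enters: $e_i e_i^T y$ has at most $p$ nonzeros in its nonzero row, so each matrix $Y_i, \bar Y_i$ (and trivially $M_i$) has $O(p)$ nonzero entries, and a coordinate $x_j$ influences only the $O(p)$ constraints indexed by nodes adjacent to $j$ (plus node $j$ itself). Hence updating the gradient component $\nabla_{i^k}\cL$ and the curvature constant for the univariate problem costs $O(p)$ per affected constraint and $O(1)$ per affected constraint for the non-$x$ variables, summing to the quoted $16(N+N_{\cG})p + 58 N_{\cG} + 51 N - 8$ flops and $6N$ cubic-root evaluations for the single-coordinate bound. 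For the epoch bound I would then sum the per-coordinate costs over the $2N$ entries of $x$, the $N$ entries each of $t, h$, the $N_{\cG}$ entries each of $g, z$ (or $N$ and $N_{\cG}$ as the model dictates), and the corresponding multiplier blocks, collecting like terms in $N^2$, $N_{\cG} N$, $N$, and $N_{\cG}$ to match $(32p+102)N^2 + (32p+116)N_{\cG}N - 2N + (16p+92)N_{\cG}$ and $6(N+N_{\cG})$ total cubic-root evaluations.

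The main obstacle is bookkeeping precision rather than any conceptual difficulty: getting the exact integer constants requires fixing a definite evaluation order for each closed-form update (how the cubic coefficients are formed from accumulated inner products, how many multiplications the Cardano-type root formula is charged, how the box-projection in $\cY$ is charged, whether repeated subexpressions such as $P_{l,i} + t_i$ are cached across the epoch), and then carefully tallying additions and multiplications separately before combining. A secondary subtlety is the distinction between the ``sequential'' epoch in which the shared quantities $\tr(Y_i xx^T)$ are maintained incrementally as coordinates change, versus a naive recomputation; the $N^2$ terms in the epoch bound reflect that across a full epoch every entry of every relevant $Y_i$-type interaction is touched, so I would make explicit which quantities are recomputed and which are updated in place, and verify that the two levels of the statement (epoch total and single-coordinate worst case) are consistent with a common cost model. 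Once the cost model is pinned down, the proof is a deterministic enumeration with no remaining mathematical content.
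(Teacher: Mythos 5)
There is a genuine gap in your structural decomposition of the coordinate subproblems, and it propagates directly into the constants the lemma asserts. You claim that only the $x$-coordinates give quartic univariate subproblems (hence cubic stationarity equations), while along $t, h, g, z$ and the multipliers the restricted Lagrangian is at most quadratic, so those updates are a linear solve plus clipping. That is false for $t_i$ and $g_i$ with $i\in\mathcal{G}$: the augmented-Lagrangian penalty $\frac{\mu}{2}\bigl[(t_i+P_{l,i})^2+(g_i+Q_{l,i})^2-z_i\bigr]^2$ is quartic in $t_i$ (and in $g_i$), so these are box-constrained quartic problems whose minimizers again require enumerating roots of a cubic. The paper's count relies on exactly this: the $t$-block and $g$-block each contribute $3N_{\cG}$ cubic-root evaluations and $(8p+38)N_{\cG}$ flops per epoch (coefficients of the quadratic residuals $\tr(Y_i xx^T)-t_i$ and $(t_i+P_{l,i})^2+(g_i+Q_{l,i})^2-z_i$, plus the quartic term), which is precisely how the total of $6(N+N_{\cG})$ root evaluations and the $p$-dependent $N_{\cG}$ terms in the flop count arise. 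Under your decomposition the epoch would need only $6N$ cubic-root evaluations and the $N_{\cG}$-terms would lose their $p$-dependence, so carrying out your plan as written cannot reproduce the stated totals. (You were likely misled by the paper's remark after the algorithm that ``$\cL$ is at most quadratic'' in the non-$x$ variables; the proof of the lemma contradicts that loose wording, and only $h$, $z$, and the multipliers are genuinely quadratic coordinates, handled as box-constrained quadratics at $14N$ and $16N_{\cG}$ flops per epoch.)

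A secondary weakness is that your proposal defers the enumeration itself (``once the cost model is pinned down, the proof is a deterministic enumeration''), whereas the enumeration \emph{is} the content of the lemma: the paper fixes the costs $8p$ per trace $\tr(Y_i xx^T)$, $11$ extra flops per squared affine-in-trace term, $3$ flops for the $M_i$ terms via $\tr(M_i xx^T)=x_i^2+x_{i+N}^2$, and then sums over the $2N$ $x$-coordinates, the $N_{\cG}$-sized $t,g,z$ blocks and the $N$-sized $h$ block to get the stated polynomials in $N$, $N_{\cG}$, $p$. Your high-level sparsity argument for the single-coordinate $O(Np)$ bound is aligned with the paper, but without correcting the $t,g$ subproblem type and actually executing the tally, the specific constants in the statement are not established.
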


\begin{proof}
We first recall that the evaluation of the traces of high-dimensional quadratic forms can exploit sparsity.
Consider for instance
matrix $Y_i$ defined in~\eqref{defYk}, where $y_i = e_i e_i^T y$ and $y$ is the admittance matrix of the system. 
Evaluating the trace of the quadratic form $\tr(Y_i xx^T)$ requires at most 
$8p$ flops, where for realistic power systems $p$ is constant and $p\ll n$. 
Moreover, terms in which $M_i$ appears \eqref{defMk} can be simplified, for example with
\[
\tr(M_i xx^T) = x_i^2 + x_{i+N}^2,
\]
and they can hence be evaluated in 3 float-float operations, and in one flop if $x_i$ or $x_{i+N}$ is a variable.

We proceed recalling that we are minimizing coordinate-wise. 
Enumerating the local minima of $ax^4 + bx^3 + cx^2 +dx +e$ is equivalent to solve the cubic equation 
$x^3 + 3b/4ax^2 + c/2ax + d/4a = 0$.
Thus, we have unconstrained optimization problems for $x$ and box-constrained quartic optimizaton problems for $t$ and $g$, both taking a similar solving cost.

We now sum up the numbers: the
evaluation of one coordinate of $\tr(Y_i xx^T)$
costs $8p$ flops. 
It then takes $11$ additional operations to compute the coefficients 
for $[\tr(Y_i xx^T)+P_{l.i}]^2$, and $3$ operations when we have $M_i$ instead of $Y_i$. 
Assuming that the number of generators is $N_{\cG}$ it is possible to check by plain counting that we need in total
$16(N+N_{\cG})p+58N_{\cG}+51N-8$ flops for coefficient evaluations coming from (\ref{lifted}b-c-d-f). 
Since each epoch performs
$2N$ such coordinate-wise iterations, it has a total cost of $(32p+102)N^2 +(32p+116)N_{\cG}N-16N$ flops plus $6N$ root evaluations for a uni-variate cubic polynomial.

Similarly, for $t_i, i\in\mathcal{G}$ \eqref{deft}, the evaluations of the coefficient are necessary only for the quadratic and quartic terms, where the quadratic terms 
$\left[(t_i+P_{l,i})^2  + (g_i+Q_{l,i})^2 - z_{i}\right]$ 
and 
$\left[\tr(Y_ixx^T)-t_i\right]^2$
respectively take $6$ and 
$(8p+2)$ flops. 
The quartic term takes $11$ more operations. 
The per-epoch update of $t_i$ comes at the cost of 
$(8p+38)N_{\cG}$ 
flops plus $3N_{\cG}$ root evaluations. 
Updates in $g$~\eqref{defg} have the same cost.

Finally, for $h_i, i\in\mathcal{N}$ and $z_i, i\in\mathcal{G}$, we have box-constrained quadratic optimization problems, 
and it is not hard to calculate that the coefficients evaluation requires respectively $12$ and $14$ flops, and solving 
a quadratic problem takes only $2$ flops. Hence, the cost per-epoch is of $14N$ and $16N_{\cG}$ flops  
for $h_i$ and $z_i$ respectively.
\end{proof}

Summarizing, the total cost of one epoch is $(32p+102) N^2 + (32p+116)N_{\cG} N  - 2N + (16p+92)N_{\cG}$ flops plus $6(N+N_{\cG})$ roots evaluations a cubic polynomial. 
Giving a bound on the number of flops necessary for the roots evaluations of a cubic polynomial can be hard, as the computations involve square and cubic roots of scalars. 
In a computation model where taking the root of a scalar requires 1 flop,
such as in the BSS machine \cite{blum1989theory},
finding the roots of a cubic polynomial requires 31 flops.
The update of a single coordinate in such a model hence needs $16(N+N_{\cG})p+58N_{\cG}+144N-8$ flops at most.



This allows to bound the expected tracking error $\cL^{k}(\xi^k,\mu) - \cL^{*,k}$ by quantities
which are easier to evaluate. 
Let us now consider the number of flops that are required in order to guarantee a given accuracy in terms of the error bound between two updates of the inputs.

 \begin{theorem}
\label{V2}
Let Assumptions~\ref{asmPL} and \ref{as:varying} hold. 
Let $p$ be defined as in Lemma \ref{noFlops}, $\sigma_l := [ \cL^{0}(\xi^0,\mu) - \cL^{*,0}]$,
and a parameter $\sigma_p := dL/\sigma_{\cL}$.
The number of flops that a BSS machine needs to perform between two consequent updates of the inputs in order to ensure that the error is bounded by $E_k := \mathbb{E}[ \cL^{k}(\xi^k,\mu) - \cL^{*,k}]$ is
\begin{align}
    \label{flopserror}
[16(N+N_{\cG})p+58N_{\cG}+144N-8]\frac{\log (E_k - 2e\cdot\sigma_p )}{\log \sigma_l}.
\end{align}
\end{theorem}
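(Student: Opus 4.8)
The plan is to \emph{invert} the tracking bound of Theorem~\ref{theorem.inexact} to read off how many coordinate-wise iterations are needed to reach a target accuracy, and then to multiply that count by the per-iteration flop count established in Lemma~\ref{noFlops}. First I would rewrite \eqref{eq:tracking_error} in the shorthand of the statement: with $\sigma_l = \cL^{0}(\xi^{0},\mu) - \cL^{*,0}$, $\sigma_p = dL/\sigma_{\cL}$, and writing $q := 1 - \sigma_{\cL}/(dL) = 1 - 1/\sigma_p$ for the contraction factor, \eqref{eq:tracking_error} reads
\[
\mathbb{E}[\cL^{k}(\xi^{k},\mu) - \cL^{*,k}] \ \le\ q^{k}\,\sigma_l + 2e\,\sigma_p ,
\]
where the condition $\sigma_{\cL}\le dL$ (needed already in Theorem~\ref{th:lin_coo}) guarantees $q\in[0,1)$, so the first term is a genuine geometric transient that decays while the second is the irreducible floor of \eqref{eq.asympt_error}.

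Next I would solve for the number of iterations. To force the expected error below a prescribed level $E_k$, it suffices to push the transient term below $E_k - 2e\sigma_p$; this already requires $E_k > 2e\sigma_p$, i.e. the target may not dip under the asymptotic floor, which is precisely the regime where the claim is non-vacuous and where $\log(E_k - 2e\sigma_p)$ is a well-defined (negative) number. Imposing $q^{k}\sigma_l \le E_k - 2e\sigma_p$ and taking logarithms, with care about the signs ($\log q<0$, and $(E_k - 2e\sigma_p)/\sigma_l\in(0,1)$ so that its logarithm is negative and division by $\log q$ reverses the inequality), produces a lower bound on the iteration count; after the elementary monotonicity estimates that recast the raw inversion into compact form (bounding the transient $q^{k}\sigma_l$ by $\sigma_l^{k}$), this lower bound is $\log(E_k - 2e\sigma_p)/\log\sigma_l$.

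Finally I would invoke Lemma~\ref{noFlops} together with the BSS-machine accounting that follows it: each scalar root extraction costs one flop, so solving the univariate cubic in each instance of \eqref{eq:updateopt_compact} costs $31$ flops, and a single coordinate update then costs at most $16(N+N_{\cG})p + 58N_{\cG} + 144N - 8$ flops. The iterations carried out between two consecutive input updates are exactly such single-coordinate updates, so multiplying this per-iteration cost by the iteration count from the previous paragraph yields precisely \eqref{flopserror}.

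The step I expect to be the main obstacle is the logarithmic inversion and its bookkeeping: keeping all three negative quantities $\log q$, $\log\sigma_l$, $\log(E_k - 2e\sigma_p)$ straight, enforcing the standing condition $\sigma_{\cL}\le dL$ so that $q\in[0,1)$, respecting the domain restriction $E_k>2e\sigma_p$ without which the statement is empty, and carrying out the monotonicity estimate that turns the verbatim inversion $\bigl(\log(E_k - 2e\sigma_p) - \log\sigma_l\bigr)/\log q$ into the stated closed form. Everything else---substituting the shorthands and quoting the flop count---is routine.
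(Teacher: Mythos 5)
Your strategy coincides with the paper's own proof: take the tracking bound of Theorem~\ref{theorem.inexact}, isolate the geometric transient, solve for the number of iterations $k$ needed between two input updates, and multiply by the worst-case per-coordinate flop cost $16(N+N_{\cG})p+58N_{\cG}+144N-8$ obtained from Lemma~\ref{noFlops} under the BSS convention that a scalar root extraction costs one flop (31 flops for the cubic). That is exactly the published argument, so at the level of strategy there is nothing to change. The one place where you go beyond the paper is the bridging step you yourself flag as the obstacle: the exact inversion of $q^{k}\sigma_l\le E_k-2e\sigma_p$ with $q=1-\sigma_{\cL}/(dL)$ gives $k\ge\bigl(\log(E_k-2e\sigma_p)-\log\sigma_l\bigr)/\log q$, and the ``monotonicity estimate'' $q^{k}\sigma_l\le\sigma_l^{k}$ that you invoke to convert this into the stated ratio $\log(E_k-2e\sigma_p)/\log\sigma_l$ is not a valid inequality in general: for $\sigma_l<1$ it fails whenever $q^{k}>\sigma_l^{k-1}$ (e.g.\ $q$ close to $1$), and for $\sigma_l\ge1$ it holds but is useless, since the surrogate requirement $\sigma_l^{k}\le E_k-2e\sigma_p$ cannot be met for large $k$ and $\log\sigma_l\ge0$ makes the resulting count meaningless. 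So that particular step would not survive scrutiny as written. In fairness, the paper's own proof is no more precise at this point---it says only that one substitutes $\sigma_p,\sigma_l$ into \eqref{errorbound2use}, solves for $\sigma_p^{k}$, and takes logarithms, and it never derives the displayed closed form from the exact inversion either---so you have faithfully reconstructed the published argument, including its loose final substitution; just do not present $q^{k}\sigma_l\le\sigma_l^{k}$ as a theorem-quality inequality, and note that the honest output of this route is the ratio $\bigl(\log(E_k-2e\sigma_p)-\log\sigma_l\bigr)/\log\bigl(1-\sigma_{\cL}/(dL)\bigr)$ multiplied by the per-coordinate flop count.
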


\begin{proof}
The linear convergence established in Theorems~\ref{th:lin_coo} and \ref{theorem.inexact} means 
$E_k$ is bounded by a function of $\sigma_p$ raised to the $k$-th power.
In turn, $k$ is bounded from above by the ratio of the total number of flops between two updates and a
worst-case bound on the numbers of flops required for one coordinate-wise update,
which by Lemma~\ref{noFlops} is $16(N+N_{\cG})p+58N_{\cG}+144N-8$, i.e., $O(Np)$.
We conclude by substituting $\sigma_p, \sigma_l$ into \eqref{errorbound2use}, solving for $\sigma_p^k$, substituting the ratio instead of $k$,
and taking the logarithm of both sides.
\end{proof}

Since modern computers are indeed not BSS machines, and their behavior is quite complex, 
the bound \eqref{flopserror} may not be a perfect estimate of the actual run time,
but it does provide some guidance as to the requirements on computing resources.
Specifically, the run-time to a constant error bound grows with $O(Np)$, 
when $\sigma_l$ and $\sigma_p$ are constant. 



\begin{figure}[t] 
\centering
\vspace{.25cm}
\includegraphics[width=.40\textwidth]{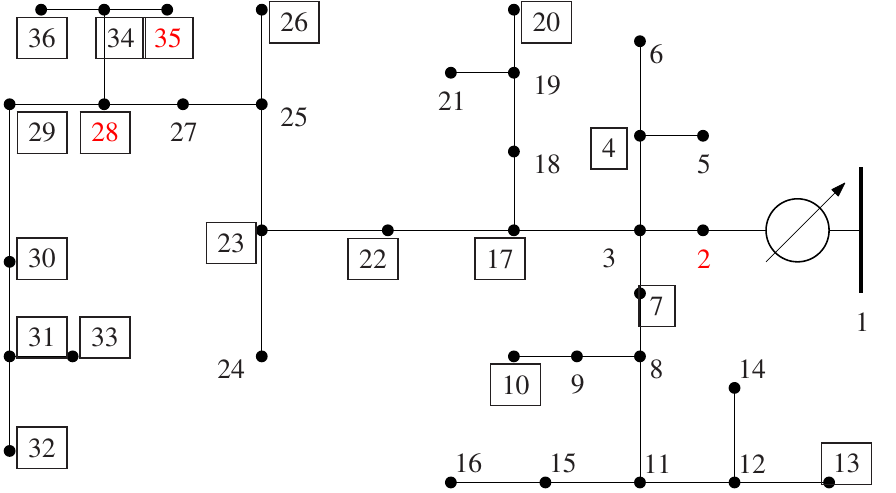}
\caption{IEEE 37-node feeder, as amended by Dall'Anese and Simonetto \cite{7480375}: 18 photovoltaic systems (secondary transformers) are marked with a box.}
\label{F_feeder}
\end{figure}

\section{Empirical Results}

To validate our approach, we consider the application of a distribution network with high penetration of photovoltaic systems, 
introduced by Dall'Anese and Simonetto \cite{7480375} (we remark that, indeed, the application of our method is not restricted to radial networks).
The modified network is a single-phase variant of the IEEE 37-node test feeder, obtained by replacing loads of 18 secondary transformers (see the boxed nodes in Figure~\ref{F_feeder}) 
with real load data from Anatolia, California, sampled with $1$ Hz frequency in August 2012~\cite{Bank13}. 
Furthermore, the generation at photovoltaic plants is simulated based on real solar irradiance data in \cite{Bank13},
with rating of these inverters at $300$ kVA at node $3$; $350$ kVA at nodes $15, 16$, and $200$ kVA for all other inverters.
We set the voltage minimum $V_{\mathrm{min}}$ and the maximum $V_{\mathrm{max}}$ to be 0.95 pu and 1.05 pu, respectively. 
The goal is to provide insights that how reliable different controllers are to avoid overvoltages and keep stability during different periods of the day.
The solar irradiance data also have the granularity of one second. 
Other parameters are kept intact.



We evaluate the performance of the modified network at 3 Hz frequency instead of the 1 Hz update, and present the numerical results in Figures~\ref{fig:midnight} and \ref{fig:fivepm}
The top rows present the voltage profile for nodes 2, 15, 28, and 35, where the performance improves when compared to Figure~\ref{F_feeder118} by Dall'Anese and Simonetto \cite{7480375}. Even during the solar peak hours between 10:00 and 14:00, the voltage regulation is well enforced. 
Taking a closer look in the zoomed-in Figure~\ref{fig:fivepm}, there is little volatility in the voltage.
The middle plots report the cost achieved \,--\, $\sum_{i \in \cG} c_q (Q_i^k)^2 + c_p (P_{\textrm{av},i}^k)^2$.
In the bottom plots, we present a measure of infeasibility defined as
\begin{align}
\label{infeasibility}
T&(x, t, g, h, z) :=
 \sum_{i\in\mathcal{N}} \left[\tr(Y_ixx^T)+\omega_i^T x-t_i\right]^2 \nonumber \\
&  + \sum_{i\in\mathcal{N}} \left[\tr(\bar Y_i xx^T)+ \bar\omega_i^T x-g_i\right]^2 
+\sum_{i\in\mathcal{N}} \left[\tr(M_i xx^T)-h_i \right]^2 
\nonumber \\ 
& + \sum_{i\in\mathcal{G}} \left[(t_i+P_{l,i})^2  + (g_i+Q_{l,i})^2 - z_{i}\right]^2, 
\end{align} 
which originates from the constraints (\ref{lifted1}-\ref{lifted10}) without considering the bound constraints, and compare it with the linearization of Dall'Anese and Simonetto~\cite{7480375}. 
To be consistent with the authors, we use the same parameters and set $\nu = 10^{-3}$, $\epsilon = 10^{-4}$, $\alpha = 0.2$,  $c_p = 3$,  $c_q = 1$, 
$\bar{f}^k(\bu^k) = \sum_{i \in \cG} c_q (Q_i^k)^2 + c_p (P_{\textrm{av},i}^k - P_i^k)^2$.
To clearly demonstrate the efficiency, besides the full measure of infeasibility $T$ in \eqref{infeasibility}, we also evaluate $T'$, a lower bound of the infeasibility \eqref{infeasibility}, where we only consider infeasibility of active generators and voltage regulations by ignoring the terms $\sum_{i\in\mathcal{N}\backslash\mathcal{G}} \left[\tr(Y_i xx^T)+ \omega_i^T x-t_i\right]^2 
+ \sum_{i\in\mathcal{N}\backslash\mathcal{G}} \left[\tr(\bar Y_i xx^T)+ \bar\omega_i^T x-g_i\right]^2$. These terms correspond to the non-generator constraints, that is, \ref{lifted9} and \ref{lifted10} in the lifted formulation \eqref{lifted} and \eqref{mg-balance-L} in the original formulation \eqref{Pmg} of \cite{7480375},
which are most affected by the linearization.
By comparing the three forms of infeasibility on the same logarithmic axis, infeasibility $T$ of our approach is approximately 4 orders of magnitude better than 
the lower bound $T'$ on the infeasibility of the linearization,
and more than 8 orders of magnitude better than the infeasibility $T$ of the linearization. Moreover, $T'$ of linearization projects obvious spikes during 10:00 -- 14:00 indicating weak infeasibility enforcement while PV generation exceeds the demand.

 \begin{figure}[tb]
 \center
\includegraphics[scale=0.42]{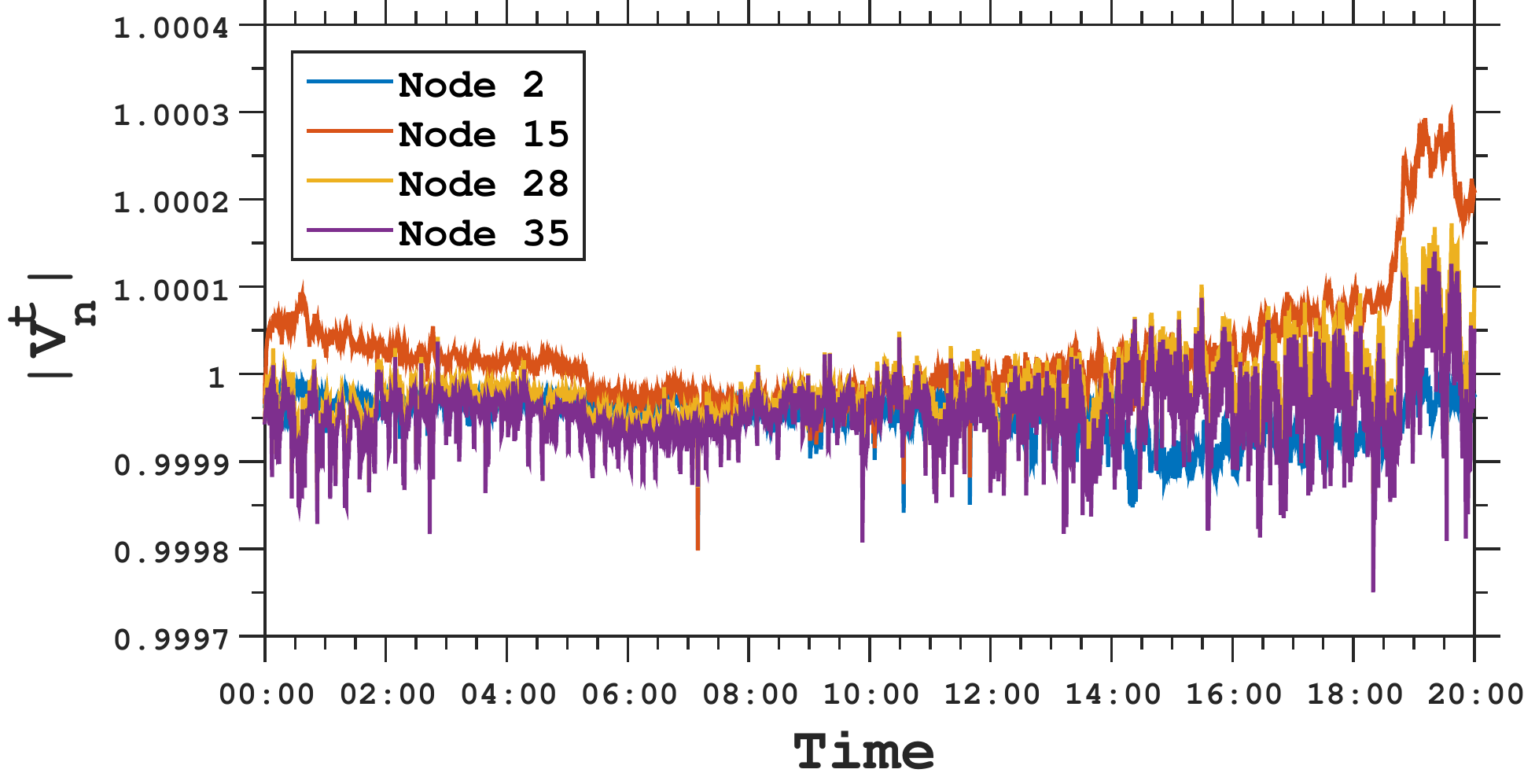}
\\ $ $\\
\vspace{-1em}
\qquad \includegraphics[scale=0.4]{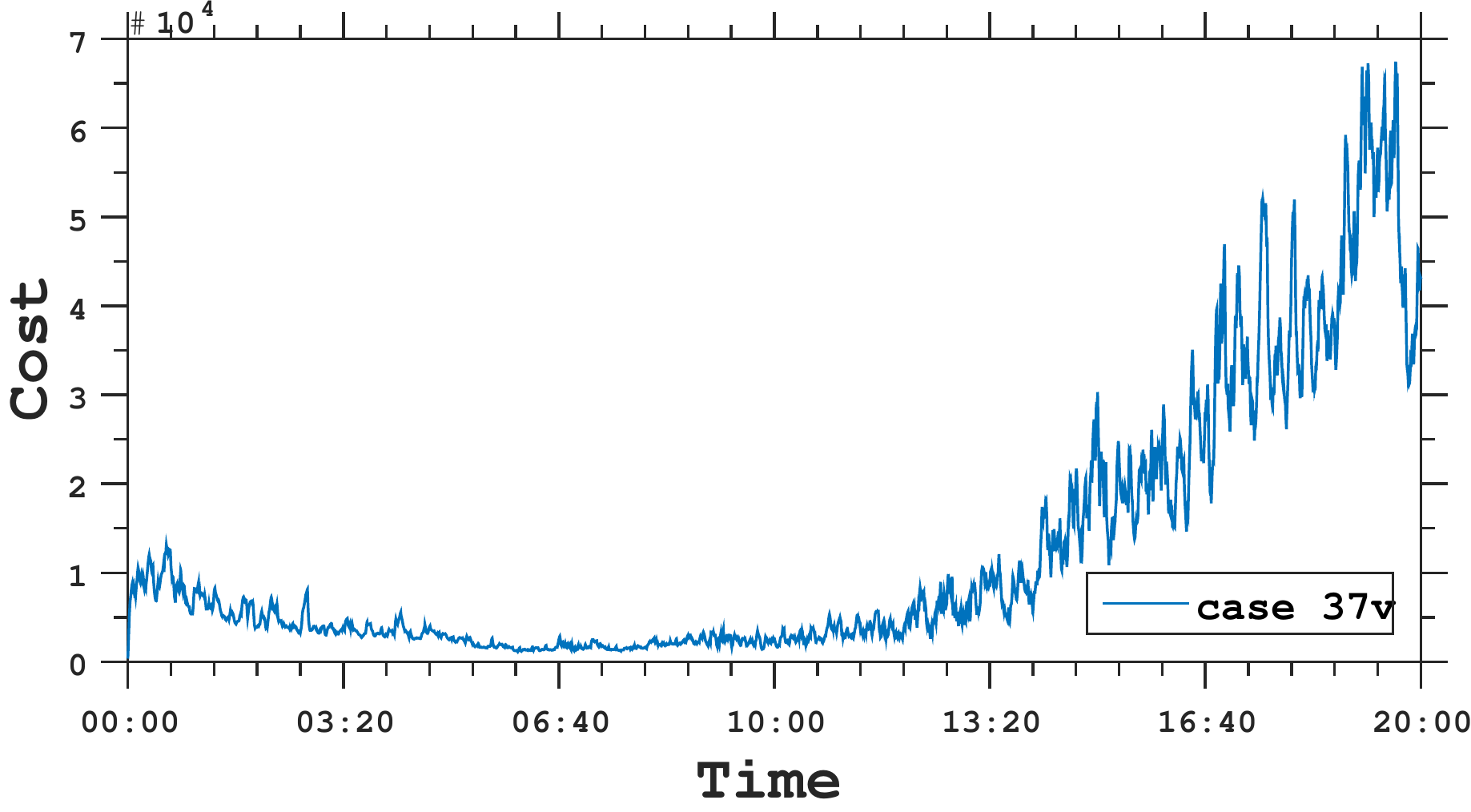}
\\ $ $\\
\vspace{-1em}
\quad \includegraphics[scale=0.4]{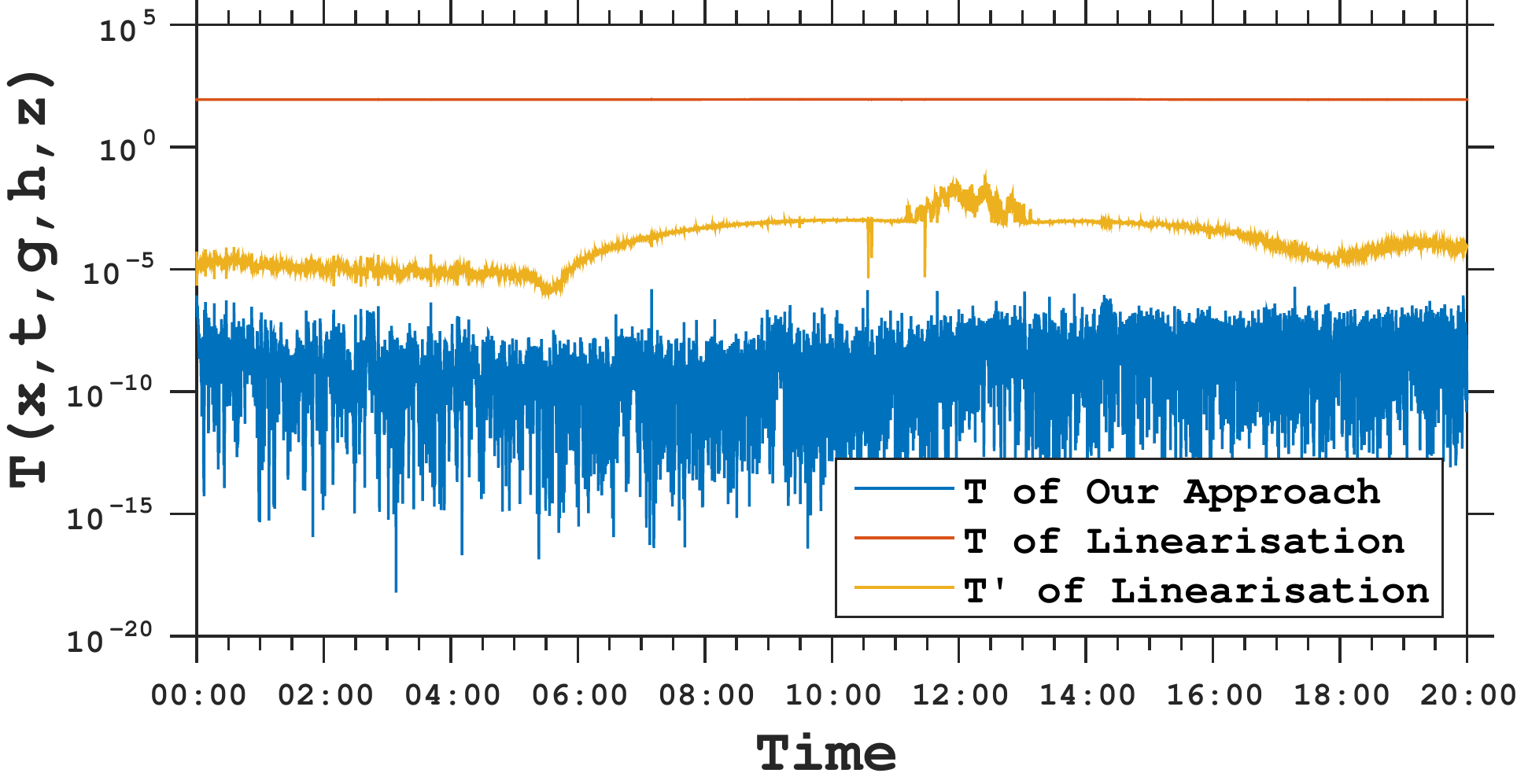}
\caption{The performance on the feeder of Figure~\ref{F_feeder}, from midnight till 8pm.}
 \label{fig:midnight}
\end{figure}

 \begin{figure}[tb]
 \center
\includegraphics[scale=0.42]{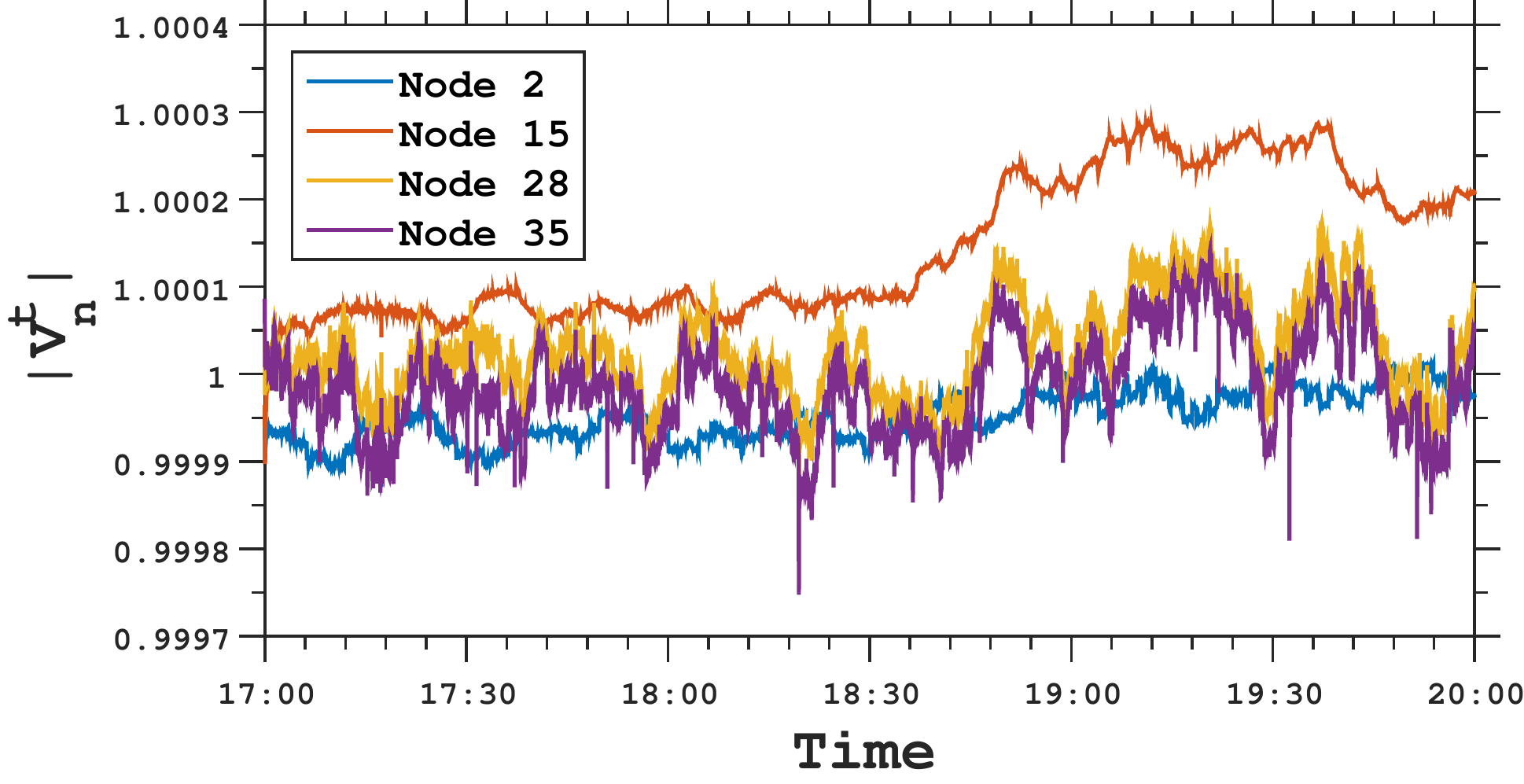}
\\ $ $\\
\vspace{-1em}
\qquad \includegraphics[scale=0.4]{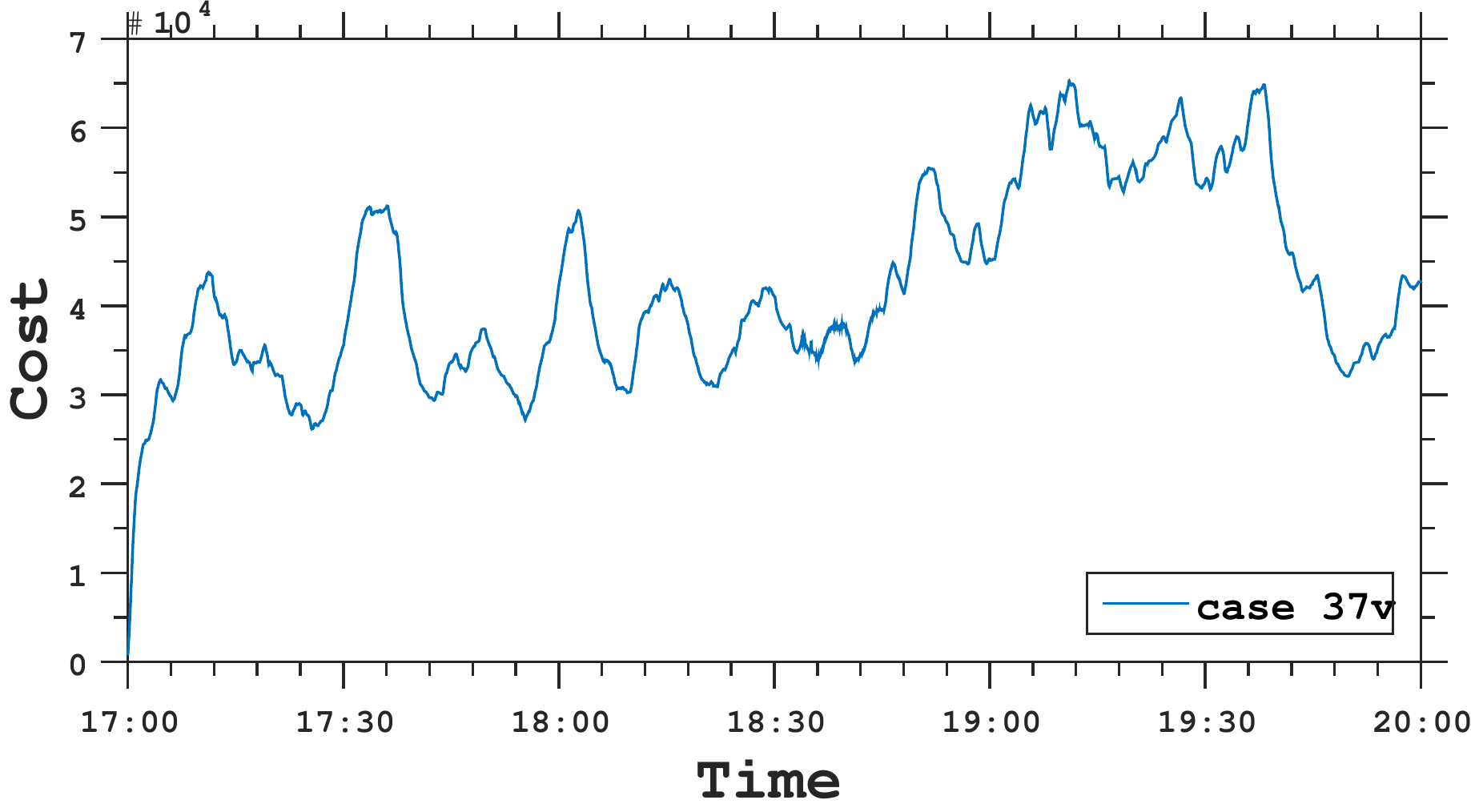}
\\ $ $\\
\vspace{-1em}
\quad \includegraphics[scale=0.4]{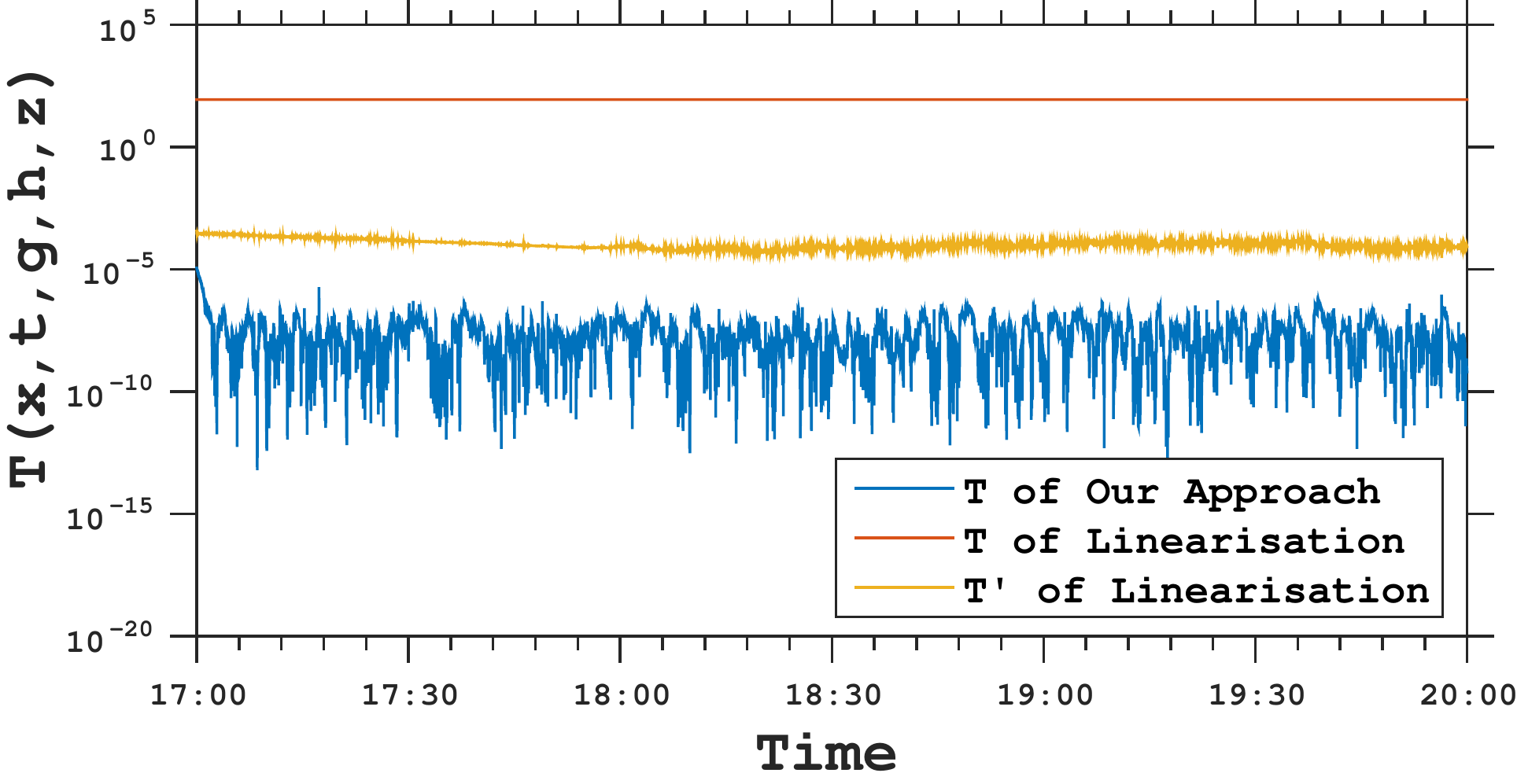}
\caption{A zoom in on the performance on the feeder of Figure~\ref{F_feeder}, from 5pm till 8pm.}
 \label{fig:fivepm}
\end{figure}


In the second experiment, we consider the IEEE 118-bus test system and employ time-varying loads and maximum active power generation (see Figure~\ref{F_feeder118}), similar as what has been done on the 37-node feeder. All generators come with time-varying maximum active power. 60 nodes, composed of both generators and non-generators, and admit time-varying loads. The voltage limits $V_{\mathrm{max}}$ and $V_{\mathrm{min}}$ are set to $1.06$ pu and $0.94$ pu, respectively. In the middle plots, we present the cost $\sum_{i \in \cG} c_2 (P_{\textrm{av},i}^k)^2 + c_2 (P_{\textrm{av},i}^k)^2 + c_0$. In the bottom plots, we present the measure of infeasibility $T(x,t,g,h,z)$. All time-varying data are sampled at 1 Hz and we present the performance of the algorithm run with different frequencies, namely, 1 Hz in blue, 1/10 Hz in red, and 1/60 Hz in yellow (middle and bottom plots). In the top plots, we also provide the voltage profile for nodes $2, 12, 55, 70$, and $94$, where $12, 55, 70$ are generators.
In Figures~\ref{fig:20h} and~\ref{fig:6h}, the bottom plots reveal that the more time the solver is allotted, the lower infeasibility $T$ it can provide, even though this does not always entail better objective-function values, as shown in the middle plots.

\begin{figure}[t] 
\centering
\vspace{.25cm}
\includegraphics[width=.50\textwidth]{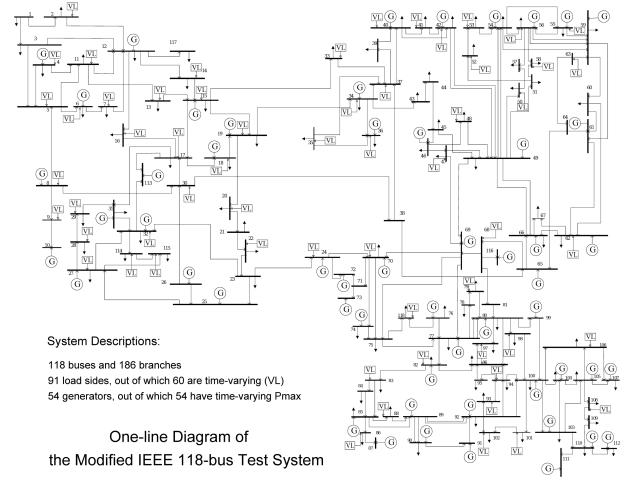}
\caption{IEEE 118-bus test system feeder, 54 generators and 60 buses with time-varying loads are marked with circles and boxes, respectively.}
\label{F_feeder118}
\end{figure}

 \begin{figure}[tb]
 \center
\includegraphics[scale=0.42]{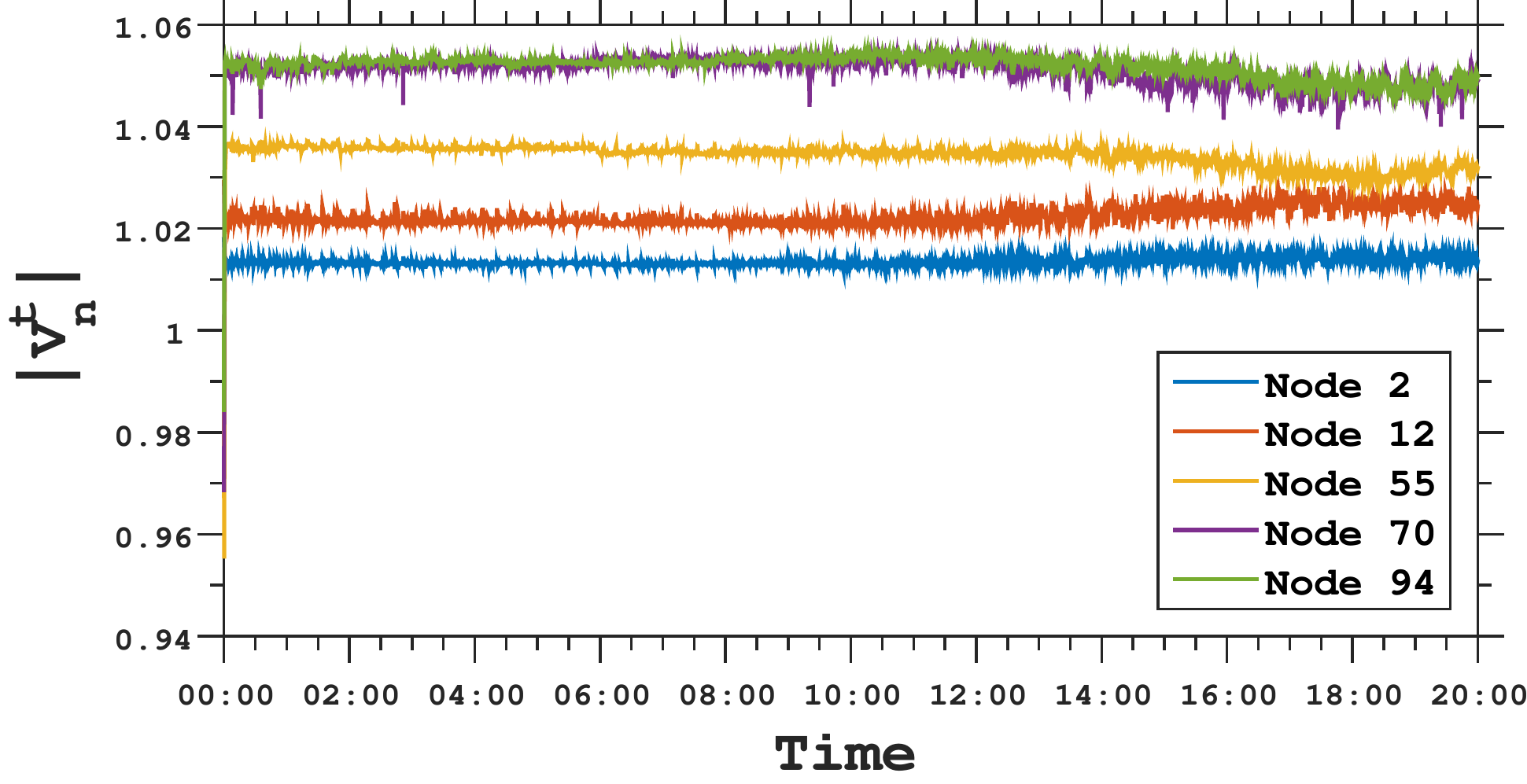}
\\ $ $\\
\vspace{-1em}
\ \includegraphics[scale=0.4]{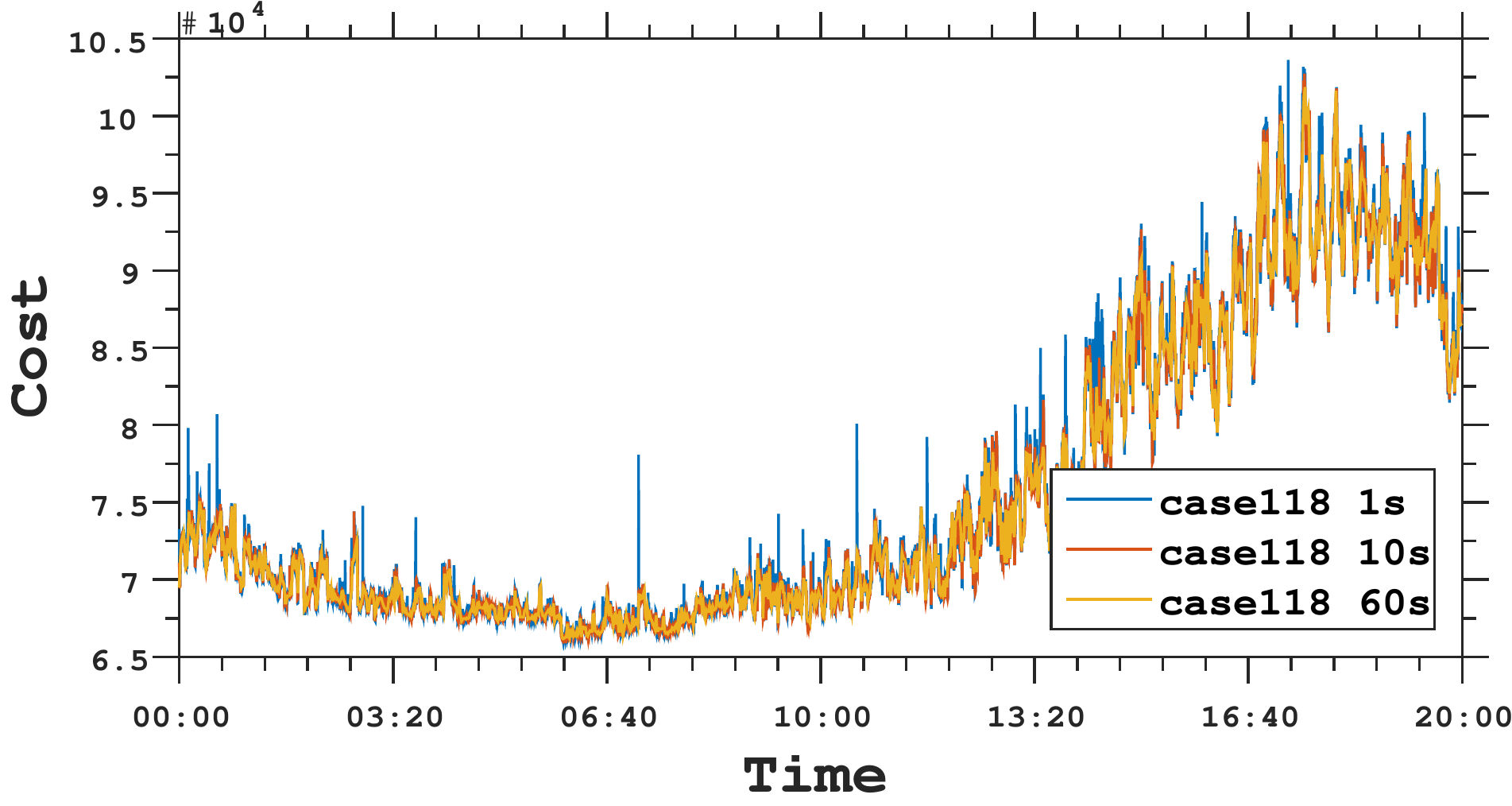}
\\ $ $\\
\vspace{-1em}
\ \includegraphics[scale=0.4]{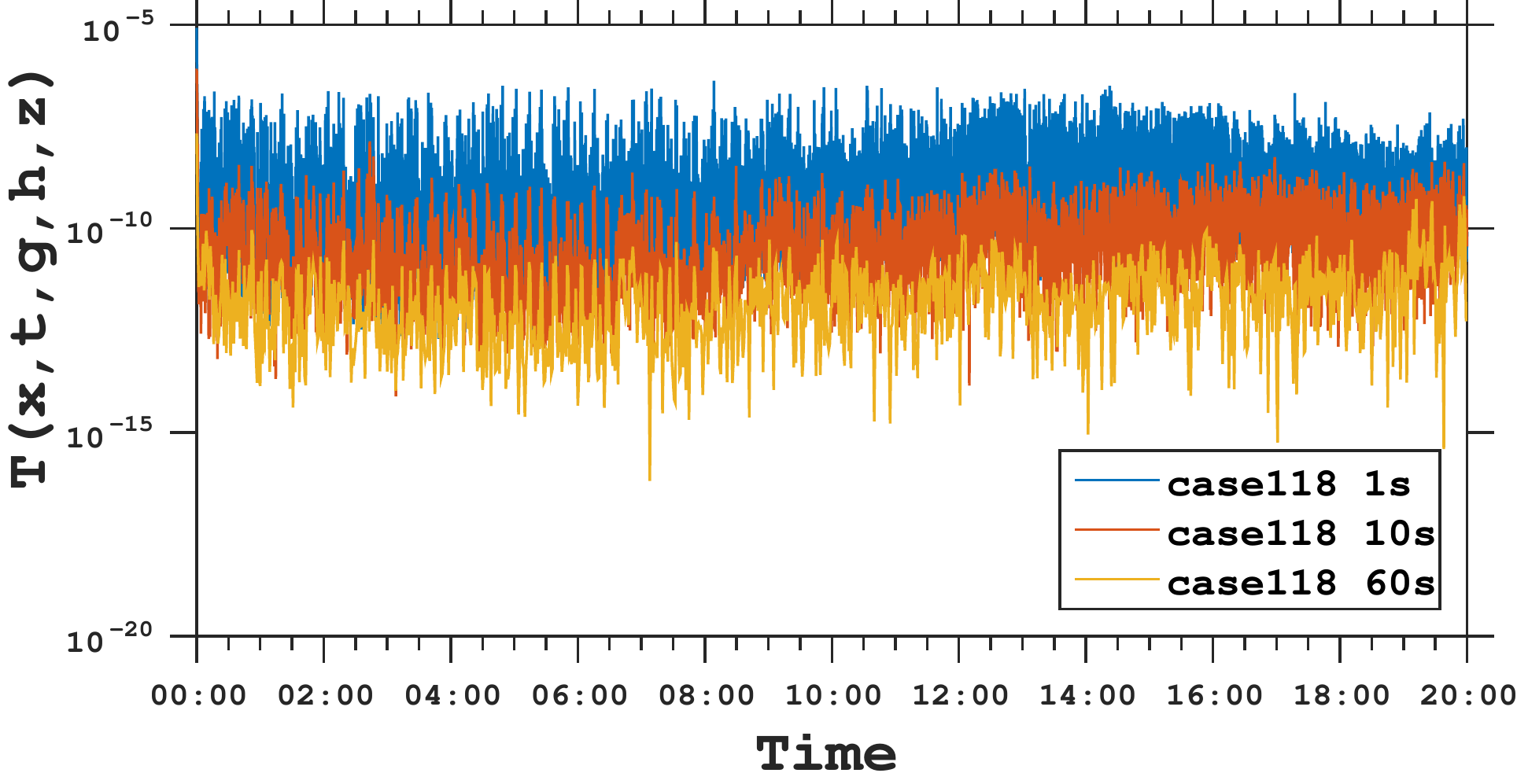}
\caption{The performance on the feeder of Figure~\ref{F_feeder118}, from midnight till 8pm.}
 \label{fig:20h}
\end{figure}

 \begin{figure}[tb]
 \center
\includegraphics[scale=0.42]{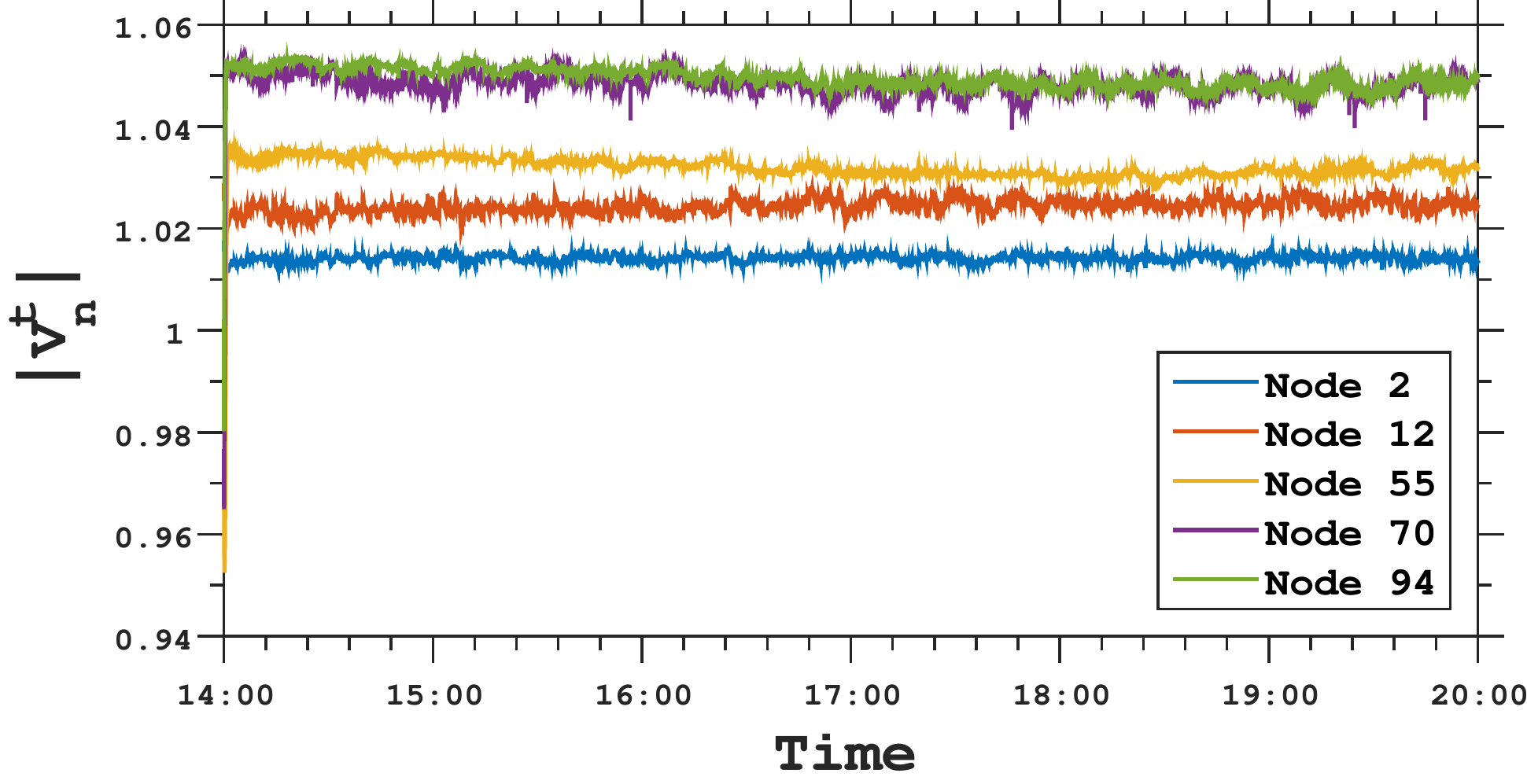}
\\ $ $\\
\vspace{-1em}
\ \includegraphics[scale=0.4]{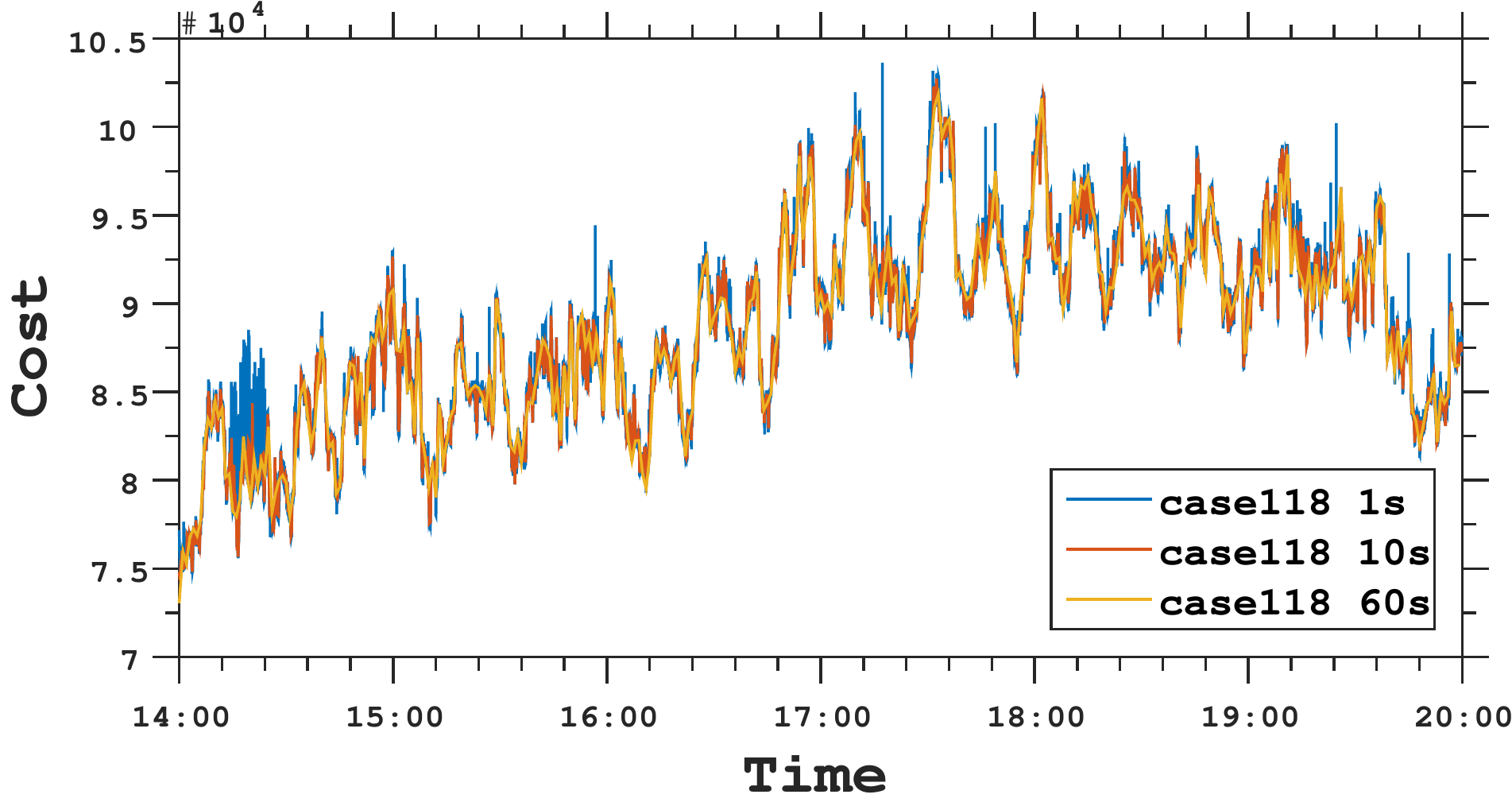}
\\ $ $\\
\vspace{-1em}
\ \includegraphics[scale=0.4]{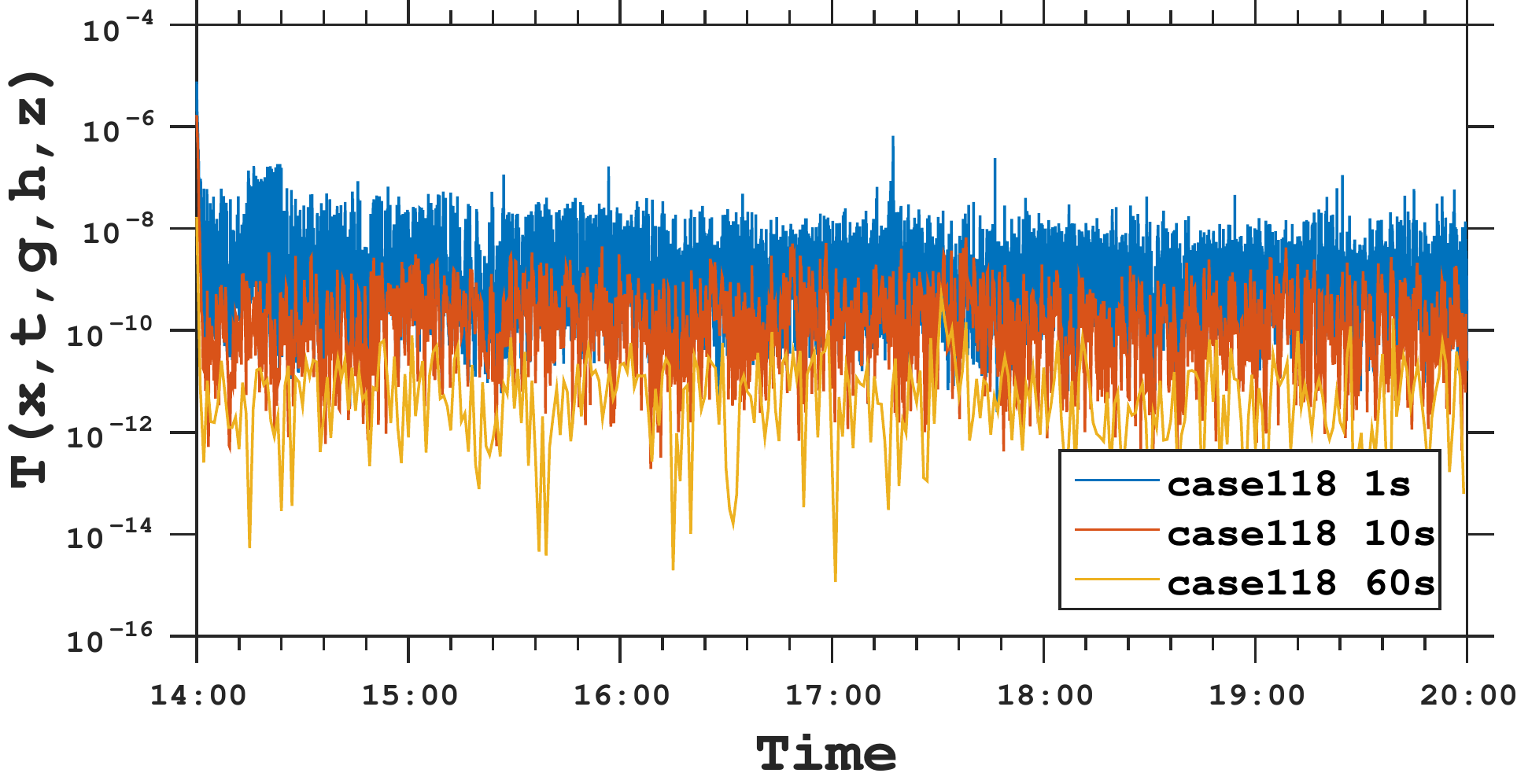}
\caption{A zoom in on the performance on the feeder of Figure~\ref{F_feeder118}, from 2pm till 8pm.}
 \label{fig:6h}
\end{figure}

\section{Conclusions}

Increasing volatility in optimal power flow parameters considerably increases the interest in seeking solutions to optimal power flows in the near real-time alternating current model.
Coordinate-descent algorithms seem well-suited to tracking solutions of optimal power flows.
Theoretically, they make it possible to analyze the number of flops per second a machine
should be capable of, in order to achieve a certain guarantee on the tracking error while dealing with
a power system of known dimension and loads and limitations of generation of known volatility.
In our analysis, we use a variant of the Polyak\,--\,{\L}ojasiewicz condition \cite{Polyak63,lojasiewicz1963propriete} 
Due to the appeal of allowing for non-convexity and non-unique optima, we imagine that there may be many subsequent applications.
As shown by computational experiments, due to the essentially linear per-iteration run-time the proposed algorithm performs very well.

\bibliographystyle{IEEEtran} 
\bibliography{pursuit,literature,jie,acopf}

\end{document}